\newtheorem{theorem}{Theorem}[section]
\newtheorem{definition}{Definition}
\newtheorem{proposition}{Proposition}[section]
\newtheorem{lemma}{Lemma}[section]
\newtheorem{corollary}{Corollary}[section]
\newtheorem{question}{Question}[section]
\newtheoremstyle{myremark}{10pt}{10pt}{}{}{\scshape}{.}{.5em}{}
\theoremstyle{remark}
\theoremstyle{myremark}
\newtheorem{remark}{Remark}
\numberwithin{equation}{section}
\newcommand\spr{Sprind\v{z}uk}
\newcommand{\be}{\mathbf{e}}
\newcommand{\GL}{\operatorname{GL}}
\newcommand{\supp}{\operatorname{supp}}
\newcommand{\cov}{\operatorname{cov}}
\newcommand{\bq}{\mathbf{q}}
\newcommand{\bx}{\mathbf{x}}
\newcommand{\cL}{\mathcal{L}}
\newcommand{\bp}{\mathbf{p}}
\newcommand{\bv}{\mathbf{v}}
\newcommand{\bw}{\mathbf{w}}
\newcommand{\f}{\mathbf{f}}
\newcommand{\rk}{\operatorname{rank}}
\newcommand{\ba}{\mathbf{a}}
\newcommand{\bc}{\mathbf{c}}
\newcommand{\by}{\mathbf{y}}
\newcommand{\bg}{\mathbf{g}}
\newcommand{\Q}{\mathbb{Q}}
\newcommand{\K}{\mathcal{K}}
\newcommand{\FF}{\mathbb{F}_q((T^{-1}))}
\newcommand{\Z}{\mathbb{Z}}
\newcommand{\R}{\mathbb{R}}
\newcommand{\N}{\mathbb{N}}
\newcommand{\inv}{^{\text{-}1}}
\begin{document}

\title{ Singular Vectors in Real Affine Subspaces}
\author{Shreyasi Datta and  Yewei Xu}
\address{ Department of Mathematics, University of Michigan, Ann Arbor, MI 48109-1043}
\email{dattash@umich.edu} \email{ywhsu@umich.edu}

\date{}

\begin{abstract}
We prove inheritance of measure zero property of the set of singular vectors for affine subspaces and submanifolds inside those affine subspaces. We define a notion of $n$-singularity for matrices, which is closely related to the uniform exponent of irrationality. For certain affine subspaces, we show that the set of singular vectors has measure zero if and only if the parametrizing matrix is not $n$-singular. In particular, we show for affine hyperplanes the set of singular vectors has measure zero if and only if the parametrizing matrix is not rational. 

\end{abstract}

\maketitle
\section{Introduction}
Diophantine approximation of real vectors by rational vectors starts with the famous Dirichlet's theorem. 
In recent years, the study of \textit{singular vectors} in the classical setting i.e.\ in $\R^n$, and in a real submanifold has drawn a lot of attention (see \cite{C, CC, DFSU, KNW, KW, AGMS, Osama, KKLM} and references therein). Much less is  known about singular vectors compared to its counterparts, \textit{very well approximable} vectors or \textit{badly approximable vectors}; see \cite{BRV} for a survey. We recall the definition of singular vectors, which were originally introduced
by A. Khintchine in the 1920s (see \cite{Kh, Cas}). We fix $\Vert\cdot\Vert$ to be the max norm in $\R^n$ for any $n>1$.

 \begin{definition}
 \label{DEFSING}
A vector $\bx=(x_1,\cdots,x_n)\in \R^n$ is said to be singular if for every $c>0$, for all sufficiently large $Q>0$ there exist $\mathbf{0}\neq\bq\in \Z^n$, $q_0\in\Z$ satisfying the following system of inequalities, \begin{equation}\label{sing}
 \begin{aligned}&\vert \bq\cdot\bx+q_0\vert<\frac{c}{Q^n},\\&\Vert \bq\Vert\leq Q.\end{aligned}
 \end{equation}
\end{definition}
In the late 1960s Davenport and Schmidt showed
(see \cite{DS, DS2}) that the set $$\{t\in \R~|~(t, t^2) \text{ is singular}\}$$ has Lebesgue measure zero. This problem is significantly difficult than studying singular vectors in Euclidean space due to the dependency of coordinates. This kind of problems boost a momentum after Kleinbock and Margulis proved the famous \textit{{\spr} conjecture} in 1998 \cite{KM}. This conjecture states that for an analytic submanifold which is not contained in any affine subspace of $\R^n$, the set of \textit{very well approximable} vectors has measure zero. Kleinbock and Margulis translated the problem into a dynamical problem and proved  \textit{quantitative nondivergence} to address the dynamical problem. The correspondence between Diophantine approximation and homogeneous dynamics was first realized by Dani, in \cite{Dani}. Then onwards, using the similar philosophy there has been a major improvement in the study of Diophantine approximation in manifolds; see \cite{BKM, BBKM, KW, KT}. It was shown in  \cite{KW}, that the set of singular vectors in a  submanifold of $\R^n$, which is not contained inside any affine subspace has Haar measure zero. \\

We now state a special case of our main theorem (Theorem \ref{sing_main_1}).
	\begin{theorem}\label{push_main}
  Let $\cL$ be an affine subspace in $\R^n$. Let $\f:U\subset \R^d\to \cL\subset \R^n$ be an analytic map such that $\f(B)$ is not contained in any proper affine subspace of $\mathcal{L}$ for any $B$ ball in $U$. Suppose that $\lambda_{U}$ and $\lambda_{\mathcal{L}}$ are Haar measures on $U$ and $\mathcal{L}$ respectively. Then the following are equivalent:
   \begin{enumerate}
    \item\label{1*} For $\lambda_U$-almost every $\bx$, $\f(\bx)$ is not singular. 
    \item\label{2*} for $\lambda_{\cL}$-almost every $\by$, $\by$ is not singular.
    \end{enumerate}
   \end{theorem}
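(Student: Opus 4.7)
The strategy is to reduce both conditions (1) and (2) to the same intrinsic property of the affine subspace $\cL$, by invoking the main result Theorem \ref{sing_main_1} twice. The plan is to fix an affine parametrization $\cL=\{A\bz+\bc:\bz\in\R^{m}\}$, where $m=\dim\cL$ and $A\in\Mat_{n\times m}(\R)$ has rank $m$; this $A$ plays the role of the ``parametrizing matrix'' referenced in the abstract. Both conditions should then be shown to be equivalent to the single intrinsic statement that $A$ is not $n$-singular.

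First, I would apply Theorem \ref{sing_main_1} to the tautological analytic parametrization $\bz\mapsto A\bz+\bc$ of $\cL$, viewed as a submanifold of itself. This map trivially satisfies the non-planarity hypothesis, since its image is all of $\cL$, so the main theorem yields the equivalence of (2) with the condition that $A$ is not $n$-singular. Next, I would apply Theorem \ref{sing_main_1} to the given analytic map $\f:U\to\cL$. By hypothesis, $\f(B)$ is not contained in any proper affine subspace of $\cL$ for any ball $B\subset U$, so the smallest affine subspace of $\R^n$ containing $\f(B)$ is precisely $\cL$, and the parametrizing matrix attached to this enclosing affine subspace is again $A$. Hence the main theorem gives the equivalence of (1) with the same condition that $A$ is not $n$-singular. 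Chaining the two equivalences yields (1)$\iff$(2).

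The main obstacle, which I expect to be resolved inside the proof of Theorem \ref{sing_main_1} rather than here, is verifying that the $n$-singularity criterion supplied by the main theorem depends only on the affine hull of the image in $\R^n$ and not on the choice of analytic parametrization. This is precisely the invariance one needs in order for the matrix $A$ attached to ``$\cL$ viewed as a submanifold of itself'' to coincide with the matrix attached to ``$\f(U)$ as a submanifold of its affine hull $\cL$''. Once that invariance is in place, Theorem \ref{push_main} drops out as a clean corollary of Theorem \ref{sing_main_1}.
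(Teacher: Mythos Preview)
Your plan misreads what Theorem \ref{sing_main_1} delivers. That theorem does not reduce either side to ``$A$ is not $n$-singular''; its conclusion is already the direct equivalence of (e3) and (e4), which are precisely your (2*) and (1*). The $n$-singularity criterion is the content of Theorem \ref{main3}, and it is only established under the special hypothesis that all rows (or all columns) of $A$ are rational multiples of a single one. For a general affine subspace $\cL$ the paper does \emph{not} show that ``$A$ not $n$-singular'' characterizes non-singularity of $\lambda_{\cL}$-almost every point; the intrinsic condition that actually appears in the proof of Theorem \ref{sing_main_1} is the higher-rank \textbf{Condition $(2)^\star$} of \eqref{condition_lastest1}, which involves all ranks $j=1,\dots,n-s$, while $n$-singularity of $A$ corresponds only to $j=1$ (Proposition \ref{1isimportant}). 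So the bridge you propose to cross twice does not exist in general.

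In fact no double application is needed: Theorem \ref{push_main} is literally the equivalence (e3)$\iff$(e4) inside Theorem \ref{sing_main_1}, applied once with $X=U$, $\mu=\lambda_U$, and the given $\f$. One only has to check the hypotheses: $U\subset\R^d$ is Besicovitch, $\lambda_U$ is Federer, nonplanarity of $(\f,\lambda_U)$ in $\cL$ is exactly your assumption, and goodness of $(\f,\lambda_U)$ follows from analyticity and nondegeneracy in $\cL$. Your instinct that both sides should reduce to a common intrinsic property of $\cL$ is correct and is precisely how (e3)$\iff$(e4) is argued via Lemma \ref{l1}; but that property is Condition $(2)'$/$(2)^\star$, not $n$-singularity of $A$.
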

  The main theorem (Theorem $1.1$) in \cite{KW} implies that if $\cL=\R^n$, then $(\ref{1*})\Leftrightarrow (\ref{2*})$. So our theorem generalizes Theorem $1.1$ in \cite{KW}  for any affine subspace $\cL$.
    
   We have the following corollary.
   \begin{corollary}
  Let $\cL$ be an affine subspace and $\f:U\subset \R^d\to \cL\subset \R^n$ be an analytic map such $\f(B)$ is connected. Suppose that $\f(B)$ is not contained in any proper affine subspace of $\mathcal{L}$ for any $B$ ball in $U$. Suppose that $\lambda_{U}$ and $\lambda_{\mathcal{L}}$ are Haar measures on $U$ and $\mathcal{L}$ respectively. Then the following are equivalent:
   \begin{enumerate}
   \item\label{1} There exists $\bx\in U$ such that $\f(\bx)$ is not singular.
   \item\label{2} There exists $\by\in \cL$ that is not singular.
    \item\label{3} For $\lambda_U$-almost every $\bx$, $\f(\bx)$ is not singular. 
    \item\label{4} for $\lambda_{\cL}$-almost every $\by$, $\by$ is not singular.
    \end{enumerate}
   \end{corollary}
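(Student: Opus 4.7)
The plan is to combine Theorem~\ref{push_main}, which already supplies $(3)\Leftrightarrow(4)$, with a few elementary observations and one genuine dichotomy for singular vectors inside an affine subspace, arranged so as to close the cycle
$$(1) \Longrightarrow (2) \Longrightarrow (4) \Longrightarrow (3) \Longrightarrow (1).$$

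The easy links come first. Since $\lambda_U(U)>0$ and $\lambda_{\cL}(\cL)>0$, any almost-everywhere statement produces a witness, giving $(3)\Rightarrow(1)$ and $(4)\Rightarrow(2)$. The implication $(1)\Rightarrow(2)$ is equally immediate, because $\f(\bx_0)\in\cL$ for every $\bx_0\in U$, so a non-singular point in the image is already a non-singular point of $\cL$. Theorem~\ref{push_main} then gives $(3)\Leftrightarrow(4)$, and so the only remaining step is $(2)\Rightarrow(4)$.

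For $(2)\Rightarrow(4)$ I would argue by contrapositive: assume that the set of singular vectors in $\cL$ is not $\lambda_\cL$-null, and deduce that every vector of $\cL$ is singular. The mechanism is the structural classification established in the paper's main result, Theorem~\ref{sing_main_1}, which ties failure of the Haar-null singular property in $\cL$ to an $n$-singularity property of a parametrizing matrix of $\cL$. Once this matrix satisfies the $n$-singularity condition, a direct analysis of the Dirichlet-type system in Definition~\ref{DEFSING} -- taking advantage of the integer coordinate $q_0$ and of the freedom in the parameter $c>0$ -- shows that the resulting Dirichlet approximants transfer from one base point of $\cL$ to every other, forcing every $\by\in\cL$ to be singular. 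The connectedness of $\f(B)$ and its non-affineness inside $\cL$ are used here to guarantee that $\cL$ really is the minimal affine hull of $\f(U)$, so that the classification is applied to the correct subspace.

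The main obstacle is the dichotomy step $(2)\Rightarrow(4)$: both the reduction of positive singular measure in $\cL$ to the $n$-singularity condition on the parametrizing matrix (the role of Theorem~\ref{sing_main_1}), and the uniform base-point argument that promotes matrix-level $n$-singularity to singularity of every point of $\cL$. Once this dichotomy is in hand, the remaining implications are formal and the four statements are equivalent.
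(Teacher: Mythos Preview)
Your cycle $(1)\Rightarrow(2)\Rightarrow(4)\Rightarrow(3)\Rightarrow(1)$ and your use of Theorem~\ref{push_main} for $(3)\Leftrightarrow(4)$ are fine, and so are the trivial implications. The problem is your argument for $(2)\Rightarrow(4)$.

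You attribute to Theorem~\ref{sing_main_1} a characterization it does not contain: Theorem~\ref{sing_main_1} says nothing about $n$-singularity of a parametrizing matrix. The $n$-singularity criterion is Theorem~\ref{main3}, and that theorem applies only to the special affine subspaces whose parametrizing matrix has all rows (or all columns) rational multiples of a single one. For a general $\cL$ the correct internal statement is Theorem~\ref{subspaceA}: failure of $(4)$ is equivalent to failure of \textbf{Condition~$(2)^\star$}, and by Proposition~\ref{1isimportant} $n$-singularity of $A$ corresponds only to failure of the $j=1$ part of that condition. For $\dim\cL\le n-2$ the condition can fail at some $j>1$ without $A$ being $n$-singular, so your chain ``not~$(4)\Rightarrow A$ is $n$-singular $\Rightarrow$ every point of $\cL$ is singular'' breaks at the first step. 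Lemma~\ref{everything} handles the second step, but you never reach it in general.

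The paper closes the gap differently: it quotes Theorem~1.4 of \cite{Kll} for $(1)\Leftrightarrow(3)$ and $(2)\Leftrightarrow(4)$, and combines this with Theorem~\ref{push_main}. If you prefer an internal route, note that Theorem~\ref{sing_main_1} itself, applied with $X=\cL$, $\mu=\lambda_{\cL}$ and $\f=\mathrm{Id}$ (so $(\mathrm{Id},\lambda_\cL)$ is trivially good and nonplanar in $\cL$), already gives the equivalence of \eqref{e2} and \eqref{e3} there, which is exactly $(2)\Leftrightarrow(4)$ here---no $n$-singularity detour needed.
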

   
   \begin{proof} Theorem \ref{push_main} in this paper implies $(\ref{3})\Leftrightarrow (\ref{4})$. Theorem $1.4$ in \cite{Kll} implies that $(\ref{1})\Leftrightarrow (\ref{3})$ and $(\ref{2})\Leftrightarrow (\ref{4})$.  Thus our theorem completes the picture by showing that all of them are equivalent for affine subspaces and some submanifolds inside them. 
   \end{proof}
    We are motivated by the following question:
\begin{question}\label{q0}
	Find all submanifolds in $\R^n$ such that Haar almost every vector in the submanifold is not singular. 
\end{question}
   Our Theorem \ref{push_main} shows that it is enough to answer the above question for affine subspaces. Following a recent paper \cite{Schlei}, we define a notion of $\omega$-singularity as follows:
\begin{definition}\label{nsingular}
Let $A$ be a $k\times l$ matrix and $\omega>0$. We call $A$ to be  $\omega$-singular if for every $c>0$, for all sufficiently large $Q>0$ there exist $\mathbf{0}\neq\bq\in \Z^l$, $\bp\in\Z^k$ satisfying the following system of inequalities, \begin{equation}\label{singAn}
 \begin{aligned}&\Vert A\bq +\bp\Vert<\frac{c}{Q^\omega},\\&\Vert \bq\Vert\leq Q.\end{aligned}
 \end{equation}
\end{definition}
An $k\times l$ matrix $A$ is called singular if $A$ is $\frac{l}{k}$-singular. The notion of $\omega$-singularity is only slighty different than the so called $\hat\omega$ (see Definition \ref{omegahat}) of a matrix, see Lemma \ref{omegahatsingular}.
   We now state our next main theorem that answers the above question for certain affine subspaces, in particular for affine hyperplanes.
   \begin{theorem}\label{main3}
   Let $\cL$ be a $s$-dimensional affine subspace in $\R^n$ and $A$ be the $(s+1)\times (n-s)$ parametrizing matrix of $\cL$ (see \S\ref{para}).
   Suppose that either (a) all the columns or (b) all the rows of A are rational multiples of one column (resp one row). Then $A$ is not $n$-singular if and only if  $\lambda_{\cL}$-a.e every point in $\cL$ is not singular.
   \end{theorem}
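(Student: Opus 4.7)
My plan is to prove the easy direction directly and, using Theorem \ref{push_main} to change parametrizations freely, to reduce the hard direction in each structural case to a one-dimensional Diophantine problem on a line. The easy direction: suppose $A$ is $n$-singular. For $c>0$ and large $Q$, pick $\bq\in\Z^{n-s}\setminus\{0\}$ and $\bp=(p_0,\ldots,p_s)\in\Z^{s+1}$ with $\|A\bq+\bp\|_\infty<c/Q^n$, $\|\bq\|\le Q$. In the standard parametrization of $\cL$ from $A$, the vector $\bq'=(p_1,\ldots,p_s,q_1,\ldots,q_{n-s})\in\Z^n$ together with $\tilde q_0=p_0$ satisfies $|\bq'\cdot\by+\tilde q_0|\le(1+\|\bx\|_1)\,\|A\bq+\bp\|_\infty$ and $\|\bq'\|\le C_A Q$, so after absorbing constants every $\by\in\cL$ is singular.

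Hard direction, case (a): Write $A=\mathbf{c}\bm\alpha^T$ with $\bm\alpha\in\Q^{n-s}$. If $n-s\ge 2$, then a nonzero $\bm k\in\Z^{n-s}$ with $\bm\alpha\cdot\bm k=0$ exists; we have $A\bm k=0$, so $A$ is automatically $n$-singular, and the lift $\bq_\by=(0,\bm k)\in\Z^n$ paired with $q_0=0$ gives $\bq_\by\cdot\by=0$ for every $\by\in\cL$. Both sides of the iff are trivially satisfied. If $n-s=1$, $\cL$ is an affine hyperplane and, after normalizing $\bm\alpha=1$, $A=\mathbf{c}\in\R^n$. The singularity of $\by=(\bx,g(\bx))$ with $g(\bx)=c_0+\sum_{i=1}^{n-1}c_i x_i$ rewrites as $\sum_{i=1}^{n-1}(q_i+q_n c_i)x_i+(q_n c_0+q_0)$; this forces $q_n\ne 0$, whereupon one needs simultaneously $\dist(q_n c_i,\Z)<c/Q^n$ for all $i=0,\ldots,n-1$. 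Hence for generic $\bx$, $\by$ is singular iff $A\in\R^n$ is (simultaneously) $n$-singular; by an elementary continued-fraction argument one then checks $A$ is $n$-singular iff $A\in\Q^n$.

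Hard direction, case (b): Write $A=\bm\beta\mathbf{r}^T$ with $\bm\beta\in\Q^{s+1}$. Using Theorem \ref{push_main} and a rational affine change of coordinates on the first $s$ variables of $\R^n$ (an element of $\GL_n(\Q)$, hence singularity-preserving), we may assume $\by=(x_1,\ldots,x_s,r_1 x_1,\ldots,r_{n-s}x_1)$. The new parametrizing matrix becomes $A'=\mathbf{e}\,\mathbf{r}^T$ for a standard basis vector $\mathbf{e}\in\R^{s+1}$, and a direct computation shows $A'$ is $n$-singular iff $\mathbf{r}$ is $n$-singular as a linear form on $\Z^{n-s}$; the same conclusion for the original $A$ follows by clearing denominators of $\bm\beta$ and rescaling $\bq\mapsto N\bq$. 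Fubini in the $(x_2,\ldots,x_s)$-directions---paired with a Borel-Cantelli bound showing that integer vectors with some $q_i\ne 0$ for $i\ge 2$ contribute only a null set of $\bx$---reduces the singularity question to: for a.e.\ $x_1$, does there exist, for every $c>0$ and every large $Q$, a nonzero $\bq\in\Z^{n-s+1}$ with $\|\bq\|\le Q$ and $\dist(x_1 L(\bq),\Z)<c/Q^n$, where $L(\bq)=q_1+\mathbf{r}\cdot\bq''$? The forward direction of this reduced equivalence is immediate; the backward direction is the heart of the problem.

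Main obstacle: from the failure of $n$-singularity of $\mathbf{r}$ one extracts $Q_k\to\infty$ and $c_0>0$ such that $|L(\bq)|\ge c_0/Q_k^n$ for all $\bq\ne 0$ with $\|\bq\|\le Q_k$. A dyadic decomposition in $|L(\bq)|$ then yields $\lambda(E_{Q_k}(c))<|U_1|$ for the bad set $E_{Q_k}(c)=\{x_1:\exists\bq,\ \|\bq\|\le Q_k,\ \dist(x_1 L(\bq),\Z)<c/Q_k^n\}$ once $c<c_0$. The hard step is promoting this to $\lambda(\limsup_k(U_1\setminus E_{Q_k}(c)))=|U_1|$, so that a.e.\ $x_1$ escapes $E_{Q_k}(c)$ for infinitely many $k$. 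I expect this to require quantitative non-divergence on $\SL_{n+1}(\R)/\SL_{n+1}(\Z)$ via the Dani correspondence applied to the orbit coming from the line $\R(1,r_1,\ldots,r_{n-s})$, or a comparable ergodic/zero-one argument; this is where the real work of the proof should lie.
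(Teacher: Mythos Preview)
Your proposal has a genuine gap, and in case~(b) you acknowledge it yourself.

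In the hyperplane case your key step is incorrect as stated. From
\[
\Bigl|\sum_{i=1}^{n-1}(q_i+q_n c_i)x_i+(q_n c_0+q_0)\Bigr|<\frac{c}{Q^n}
\]
for a \emph{fixed} $\bx$ you cannot conclude that each $|q_i+q_n c_i|$ is individually small: cancellation is allowed, and singularity of $\by$ only asks for the existence of some $(\bq,q_0)$ at each scale $Q$, not a choice uniform in $\bx$. So the assertion ``for generic $\bx$, $\by$ is singular iff $A\in\R^n$ is $n$-singular'' is precisely the nontrivial claim to be proved, and your argument does not prove it. (The side statement ``$A$ is $n$-singular iff $A\in\Q^n$'' is a separate easy lemma, Lemma~\ref{trivial} in the paper.)

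In case~(b) you correctly guess that quantitative nondivergence and the Dani correspondence are the right tools, but you stop before carrying anything out. Your Fubini/Borel--Cantelli reduction is also not justified: for a given $\by$ the witnessing $\bq$ may well have $q_i\ne 0$ for $i\ge 2$, and the union over all such $\bq$ does not obviously contribute a null set without more work. Your handling of case~(a) with $n-s\ge 2$ (where $A$ is automatically $n$-singular, so the iff is vacuous) is correct.

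The paper takes a quite different route and avoids the direct measure estimate you are stuck on. It first encodes the quantitative nondivergence into a single equivalence (Theorem~\ref{subspaceA}): $\lambda_{\cL_A}$-a.e.\ $\by$ is not singular iff $\cL_A$ satisfies a family of conditions (\textbf{Condition $(2)^\star$}, equation~\eqref{condition_lastest1}) on exterior powers $\bw\in\bigwedge^j\Z^{n+1}$, $j=1,\dots,n-s$. The $j=1$ condition is exactly ``$A$ is not $n$-singular'' (Proposition~\ref{1isimportant}), which already finishes the hyperplane case. For case~(b) the crucial device is Proposition~\ref{lastp}: removing from $A$ a row that is a rational combination of the remaining rows does not change whether Condition~$(\omega,j)$ holds. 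One strips rows until only the nontrivial row $\ba$ survives, landing on the $0$-dimensional $\cL_{\ba}=\{(0,\dots,0,\ba)\}$. For a single point in $\R^n$, ``not $n$-singular'' is literally ``not singular'', so Theorem~\ref{subspaceA} gives all the higher-$j$ conditions for $\cL_{\ba}$, and Proposition~\ref{lastp} lifts them back to $\cL_A$. Thus the paper never attempts your a.e.\ promotion argument; it transfers the whole problem to a zero-dimensional subspace via the exterior-algebra formalism.
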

   By definition \ref{nsingular}, the parametrizing matrix $A$ being $n$-singular implies it is singular, since $\frac{n-s}{s+1}<n$. Hence $A$ being not $n$-singular is much weaker assumption than $A$ being not singular. Moreover, in Lemma \ref{trivial}, we show that for $s=n-1$, if the parametrizing matrix $A$ is $n$-singular then $A\in\Q^n$. Hence
 as a corollary of Theorem \ref{main3}, we have the following:
 \begin{corollary}\label{hyperplane}
  Let $\cL$ be an affine hyperplane in $\R^n$ and $A$ be the $n\times 1$ parametrizing matrix of $\cL$ (see \S\ref{para}). Then $A\notin\Q^n$ if and only if $\lambda_{\cL}$-a.e every point in $\cL$ is not singular. 
 \end{corollary}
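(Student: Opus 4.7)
The plan is to derive Corollary \ref{hyperplane} directly from Theorem \ref{main3} together with Lemma \ref{trivial}. The hyperplane case is the case $s = n-1$ of Theorem \ref{main3}, so the parametrizing matrix $A$ has size $(s+1)\times(n-s) = n \times 1$; in particular $A$ consists of a single column. Condition (a) of Theorem \ref{main3} — that all columns of $A$ are rational multiples of one fixed column — is therefore vacuously satisfied, and so I may apply the theorem and conclude that $A$ is not $n$-singular if and only if $\lambda_{\cL}$-almost every point of $\cL$ is not singular.

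It then remains to reinterpret the condition ``$A$ is not $n$-singular'' as ``$A \notin \Q^n$''. One direction is exactly Lemma \ref{trivial}: when $s = n-1$, $n$-singularity of $A$ forces $A \in \Q^n$. For the converse I would verify the elementary fact that any $A \in \Q^n$ is automatically $n$-singular (in fact $\omega$-singular for every $\omega > 0$): writing $A = (p_1/d, \ldots, p_n/d)^{T}$ with common denominator $d \in \Z_{>0}$, the choice $\bq = d$ (a nonzero integer, since here $l=1$) together with $\bp = -(p_1, \ldots, p_n)^{T} \in \Z^n$ gives $A\bq + \bp = 0$, so $\|A\bq + \bp\| < c Q^{-n}$ holds for every $c > 0$ as soon as $Q \geq d$, which is exactly Definition \ref{nsingular} with $\omega = n$.

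Chaining the two equivalences delivers the corollary. There is no substantive obstacle in this derivation: the real content lives in Theorem \ref{main3} (the measure-theoretic transfer from the hyperplane to its parametrizing matrix) and in Lemma \ref{trivial} (the algebraic statement that $n$-singularity collapses to rationality precisely when $s = n-1$). The only additional ingredient is the trivial converse recorded above, which is immediate from Definition \ref{nsingular}, and the observation that the single-column shape of $A$ makes the otherwise nontrivial hypothesis of Theorem \ref{main3} automatic.
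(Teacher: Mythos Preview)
Your proposal is correct and follows essentially the same route as the paper: the paper introduces Corollary \ref{hyperplane} precisely as a consequence of Theorem \ref{main3} together with Lemma \ref{trivial}, noting that for $s=n-1$ the parametrizing matrix is $n\times 1$ so the structural hypothesis of Theorem \ref{main3} is automatic. You have made explicit the easy converse (that $A\in\Q^n$ forces $n$-singularity via a common denominator), which the paper leaves implicit; otherwise the arguments coincide.
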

 
 In a recent paper \cite{KSSY}, it was shown that Lebesgue-almost every point on the planar line $y = ax+b$ is not \textit{Dirichlet-improvable} if and only if $(a,b)\notin\Q^2$. By the definition of Dirichlet-improvability (see \cite{DavenportSch69})  singular vectors are Dirichlet-improvable. Hence for $n=2$ their result implies our Corollary \ref{hyperplane}.
    
 We note that for affine subspaces $\cL$ of dimension less than $n-1$, the condition on $A$ as in Theorem \ref{main3} is not as simple as in the case of affine hyperplanes. Using \cite[Theorem 3.8]{DFSU} and Lemma \ref{omegahatsingular}, the packing dimension of $n$-singular parametrizing matrices $A$ for $s\leq n-2$, can be shown to be positive, but stricly less than $(s+1)(n-s)$.

 We conclude this section by stating the first main theorem of this paper in its full generality and then briefly discussing about methods of proof.
 For definitions of \textit{Federer measure, Besicovitch space, good map, nonplanarity} and \textit{nondegeneracy in affine subspaces}, the readers are referred to \S \ref{sev}.
\begin{theorem}\label{sing_main_1}
	Let $\mu$ be a Federer measure on a Besicovitch metric space $X, \mathcal{L}$ be an affine subspace of $\R^n$, and let $\f: X\to \mathcal{L}$ be a continuous map such that $(\f,\mu)$ is good and nonplanar in $\mathcal{L}.$ Let $\f_\star\mu$ be the pushforward measure and $\lambda_{\mathcal{L}}$ be the Haar measure on $\mathcal{L}$. Then the following are equivalent:
	\begin{enumerate}
	\item\label{e11} There exists one $x\in \supp(\mu)$ such that $\f(x)$ is not singular.
	\item\label{e2} There exists one $\by\in\mathcal{L}$ which is not singular. 
	\item\label{e3} For $\lambda_{\mathcal{L}}$ almost  every $\by\in\mathcal{L}$, $\by$ is not singular.
	\item\label{e4} For $\mu$-almost every $x\in X$, $\f(x)$ is not singular.
	\end{enumerate}
	\end{theorem}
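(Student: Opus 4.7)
The plan is to close the logical circle $(e4) \Rightarrow (e11) \Rightarrow (e2) \Rightarrow (e4)$ together with $(e3) \Rightarrow (e2)$, and to recover $(e2) \Rightarrow (e3)$ as a special case of the main technical arrow $(e2) \Rightarrow (e4)$ by applying the latter to the identity map $\Id : \mathcal{L} \to \mathcal{L}$ with Lebesgue measure (which is manifestly good and nonplanar in $\mathcal{L}$). Among these arrows the trivial ones are: $(e4) \Rightarrow (e11)$ by choosing any point in the full-measure set (since $\supp(\mu) \neq \emptyset$); and $(e11) \Rightarrow (e2)$ together with $(e3) \Rightarrow (e2)$ because $\f(\supp(\mu)) \subset \mathcal{L}$ and $\lambda_\mathcal{L}$ is nontrivial on $\mathcal{L}$. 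The substantive work is entirely in the implication $(e2) \Rightarrow (e4)$.

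For this step I would follow the Dani-correspondence/quantitative-nondivergence strategy initiated by Kleinbock--Margulis. Let $X_{n+1} := \SL_{n+1}(\R)/\SL_{n+1}(\Z)$, let $g_t := \diag(e^{nt}, e^{-t}, \dots, e^{-t})$, and let $u_\by$ denote the upper-triangular unipotent with $\by^T$ in the first row to the right of the diagonal. A standard transference gives that $\by \in \R^n$ is \emph{not} singular iff the trajectory $\{g_t u_\by \Z^{n+1}\}_{t > 0}$ fails to diverge in $X_{n+1}$; equivalently, iff there exist $\epsilon > 0$ and $t_k \to \infty$ with $g_{t_k} u_\by \Z^{n+1} \in K_\epsilon := \{\Lambda : \lambda_1(\Lambda) \geq \epsilon\}$. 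Thus both $(e2)$ and $(e4)$ are assertions about nondivergence of lattice trajectories attached to points of $\mathcal{L}$.

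The technical heart is a quantitative nondivergence estimate tailored to good nonplanar maps into an affine subspace: for $(\f,\mu)$ good and nonplanar in $\mathcal{L}$, every ball $B \subset X$, every $t > 0$, and every sufficiently small $\epsilon > 0$,
\begin{equation*}
\mu\bigl(\{ x \in B : g_t u_{\f(x)} \Z^{n+1} \notin K_\epsilon \}\bigr) \;\leq\; C \Bigl(\frac{\epsilon}{\rho(t)}\Bigr)^{\alpha} \mu(B),
\end{equation*}
where $\alpha > 0$ depends on the Federer/Besicovitch/good data, and $\rho(t)$ is a geometric quantity controlling how deep the trajectory attached to $\mathcal{L}$ alone can descend into the cusp at time $t$. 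Condition $(e2)$ then supplies a sequence $t_k \to \infty$ along which $\rho(t_k)$ is bounded below by a positive constant --- namely, the return times of $g_t u_{\by_0} \Z^{n+1}$ to a fixed compact set, for any non-singular $\by_0 \in \mathcal{L}$ furnished by $(e2)$. Applying the estimate at $t = t_k$ with a summable choice $\epsilon = \epsilon_k$ and invoking Borel--Cantelli, for $\mu$-a.e.\ $x$ the trajectory $g_{t_k} u_{\f(x)} \Z^{n+1}$ remains in a fixed $K_{\epsilon_0}$ for infinitely many $k$; by the Dani correspondence this is exactly $(e4)$. Specializing to $\f = \Id$, $\mu = \lambda_\mathcal{L}$ simultaneously recovers $(e2) \Rightarrow (e3)$, closing the circle.

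The hard part is establishing the nondivergence estimate with a workable $\rho(t)$-dependence. Classical Kleinbock--Margulis nondivergence is formulated for maps into all of $\R^n$ and cannot distinguish an arithmetically ``bad'' subspace from a ``good'' one. The required refinement must ensure that the only obstruction to nondivergence along $\f$ is arithmetic data attached to the subspace $\mathcal{L}$ itself (morally, the parametrizing matrix $A$), not to the choice of $\f$; that the non-singularity of a \emph{single} vector $\by_0 \in \mathcal{L}$ suffices to control $\rho(t)$ along a sequence $t_k \to \infty$; and that the estimate holds in the general Federer/Besicovitch framework. The genuinely arithmetic obstructions exhibited in Theorem \ref{main3} and Corollary \ref{hyperplane} show that such a $\rho(t)$-dependent packaging is unavoidable. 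Carrying this out --- with \emph{nonplanarity in $\mathcal{L}$} correctly substituted for the usual nondegeneracy and with an affine-subspace adaptation of Kleinbock's linearization of the $(C,\alpha)$-good framework --- is the main technical burden.
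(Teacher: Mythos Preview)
Your outline is essentially the paper's proof. Both route the substantive implication $(e2)\Rightarrow(e4)$ through Dani correspondence plus the sharpened quantitative nondivergence of Kleinbock, and both reduce $(e2)\Rightarrow(e3)$ to the special case $\f=\Id$, $\mu=\lambda_{\mathcal L}$. Your ``$\rho(t)$ attached to $\mathcal L$ alone'' is exactly what the paper proves in Proposition~\ref{p3} and Lemma~\ref{l1}: the covolume condition (Condition~(\ref{c22}) of Theorem~\ref{Thm_sing}) rewrites as a condition involving only the matrix $R$ determined by $\mathcal L$, independently of the particular nonplanar pair $(\f,\mu)$. Your claim that a single non-singular $\by_0\in\mathcal L$ bounds $\rho(t_k)$ from below is the contrapositive of Proposition~\ref{p2} (applied to $(\Id,\lambda_{\mathcal L})$), using Minkowski (Lemma~\ref{p33}) to pass from $\delta(g_{t_k}u_{\by_0}\Z^{n+1})\ge\varepsilon$ to a uniform covolume lower bound over all primitive $\Gamma$.

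One technical slip: your Borel--Cantelli step does not quite close. With a summable $\varepsilon_k\to 0$ you obtain, for $\mu$-a.e.\ $x$, that $\delta(g_{t_k}u_{\f(x)}\Z^{n+1})\ge\varepsilon_k$ for all large $k$; but since $\varepsilon_k\to 0$ this does \emph{not} yield returns to a fixed $K_{\varepsilon_0}$, hence does not rule out divergence. The correct endgame (which the paper uses implicitly in Theorem~\ref{Thm_sing}) is to fix $\varepsilon>0$: the estimate gives $\mu\bigl(\{x:g_{t_k}u_{\f(x)}\Z^{n+1}\notin K_\varepsilon\}\bigr)\le C(\varepsilon/\rho)^\alpha\mu(B)$ uniformly in $k$, whence $\mu\bigl(\liminf_k\{g_{t_k}u_{\f(x)}\Z^{n+1}\notin K_\varepsilon\}\bigr)\le C(\varepsilon/\rho)^\alpha\mu(B)$. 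Since $\{\f(x)\text{ singular}\}$ is contained in this $\liminf$ for every $\varepsilon$, letting $\varepsilon\to 0$ gives measure zero. No summability is needed.
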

		The main tool to establishing Theorems \ref{sing_main_1} is  a sharpened \textit{quantitative nondivergence} in \cite{Kleinbock-exponent} (Theorem $2.2$). It is important for us that the second hypothesis in Theorem $2.2$ in \cite{Kleinbock-exponent} is weaker than its previous version in \cite{KM}.  We consider the space of unimodular lattices in $\R^{n+1}$ i.e. $\mathrm{SL_{n+1}(\R)/SL_{n+1}(\Z)}$. We take the point $u_{\bx}\Z^{n+1},$  where $u_{\bx}$ is an \textit{unipotent} element as defined in Section \ref{Dyna} and let $g_k$ be a diagonal flow as defined in Section \ref{Dyna}. The \textit{Dani correspondence} links the singularity property of $\bx$ with the divergence of $u_{\bx}\Z^{n+1}$ under the diagonal flow $g_k$, as $k\to \infty$. Both side of this correspondence is crucial to prove Theorem \ref{sing_main_1}. Following ideas from \cite{Kleinbock-exponent, KM, Kleinbock-extremal}, we used quantitative nondivergence to get measure estimates  of the divergent orbits (Theorem \ref{Thm_sing}). We then show that second hypothesis in Theorem \ref{Thm_sing} is necessary. This idea goes back to the work of Kleinbock in \cite{Kleinbock-exponent}, where he dealt with \textit{Diophantine exponents}. 
		
		In order to prove Theorem \ref{main3}, we study approximation of a matrix by higher rank subgroups  and how it effects Diophantine property of the parametrizing matrix. We use estimates from \cite{Kleinbock-exponent} and analyse them in order to apply in our set-up. The novelty of Theorem \ref{main3} is that the sufficient and necessary condition on the parametrizing $A$ is that it is not $n$-singular, which is weaker assumption than $A$ being not singular.

		Our method is very much influenced by the work of Kleinbock 
		in \cite{Kleinbock-exponent}. We show in this paper that the techniques developed in his paper can be adopted in the setting of singular vectors.

\section{Corollaries of Theorems \ref{sing_main_1}}
In Theorem \ref{sing_main_1}, if we take $X=U\subset \R^d$  and $\f$  is a smooth \textit{nondegenerate} map in $\cL$, $M=\f(U)$ then the following corollary follows. Let $\lambda_{\cL}$ be the Haar measure on $\cL$.

\begin{corollary} 
Let $\cL$ be an affine subspace of $\R^n$, and let $M$ be a submanifold of $\cL$ which is nondegenerate in $\cL$. Suppose that $\lambda_{\cL}$ and $\lambda_{M}$ are Haar measures on $\cL$ and $M$ respectively. Then the following are equivalent:
\begin{itemize}
    \item There exists $\by\in M$ such that $\by$ is not singular.
    \item There exists $\by\in \cL$ such that $\by$ is not singular.
    \item For $\lambda_{\cL}$-almost every $\by\in\cL$, $\by$ is not singular.
    \item For $\lambda_{M}$-almost every $\by\in M$, $\by$ is not singular.
\end{itemize}
\end{corollary}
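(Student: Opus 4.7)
The plan is to reduce this to a direct application of Theorem \ref{sing_main_1}. Since $M$ is a nondegenerate submanifold of $\cL$, every point $p\in M$ admits a neighborhood $V\subset M$ of the form $V=\f(U)$ for some open $U\subset\R^d$, where $d=\dim M$, together with a smooth nondegenerate parametrization $\f: U\to \cL$. Equipping $U$ with Lebesgue measure $\lambda_U$, the space $U$ is Besicovitch and $\lambda_U$ is Federer; furthermore, smoothness and nondegeneracy of $\f$ in $\cL$ are precisely the standard hypotheses ensuring that $(\f,\lambda_U)$ is good and nonplanar in $\cL$ (cf.\ \cite{KM} and the definitions in \S\ref{sev}). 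Thus the hypotheses of Theorem \ref{sing_main_1} are met with $X=U$ and $\mu=\lambda_U$.

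Applying Theorem \ref{sing_main_1} to this single chart yields the equivalence of the following four statements: (i) there exists a non-singular point in $V$; (ii) there exists a non-singular point in $\cL$; (iii) $\lambda_\cL$-almost every point of $\cL$ is non-singular; (iv) $\f_\star\lambda_U$-almost every point of $V$ is non-singular. Because $\f$ is a smooth immersion, the pushforward $\f_\star\lambda_U$ is mutually absolutely continuous with the restriction $\lambda_M|_V$, so statement (iv) is equivalent to $\lambda_M$-almost every point of $V$ being non-singular. Hence the corollary holds with $M$ replaced by a single chart $V$.

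To globalize, I would cover $M$ by a countable collection of such charts $\{V_k\}$. A $\lambda_M$-almost-everywhere statement on $M$ is equivalent to its holding on each $V_k$, and the existence of a non-singular point in $M$ is equivalent to its existence in some $V_k$, so the chart-wise equivalences patch together. I do not anticipate any genuine obstacle here; the only step requiring a short verification is the claim that a smooth nondegenerate map into an affine subspace $\cL$ produces a good, nonplanar pair $(\f,\lambda_U)$ in the sense of \S\ref{sev}, and this is well-established in the literature on nondegenerate submanifolds.
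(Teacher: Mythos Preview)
Your proposal is correct and follows essentially the same approach as the paper: the paper simply states that the corollary follows by taking $X=U\subset\R^d$, $\mu=\lambda_U$, and $\f$ a smooth nondegenerate map into $\cL$ with $M=\f(U)$ in Theorem~\ref{sing_main_1}. Your write-up supplies more detail than the paper does (verifying goodness and nonplanarity, identifying $\f_\star\lambda_U$ with $\lambda_M|_V$, and patching over countably many charts), but the underlying argument is identical.
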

In \cite{KLW}, a class of measures called as `friendly measures' was introduced, which  contains Lebesgue measure, fractal measures,  smooth measures on nondegenerate manifolds and many measures naturally arising from geometric constructions. In particular, if $\mu$ is friendly then $(\mathrm{Id},\mu)$ is good and nonplanar in $\R^n$.  Many natural measures coming from geometric constructions can be shown to
have an even stronger property. These measures were referred to as ‘absolutely
decaying and Federer’ in \cite{KLW} and as ‘absolutely friendly’ in \cite{PV}. If $\mu$
is absolutely decaying, Federer and $\f$ is nondegenerate at $\mu$-almost every point of $\R^d$,
then $(\f, \mu)$ is good and nonplanar; see \cite{KLW} \S $7$. Hence the following corollaries follow from Theorem \ref{sing_main_1}. 
  
  \begin{corollary}\label{cor200}
Let $\cL$ be a $d$-dimensional affine subspace of $\R^n$, and $\mu$ be a  (absolutely) friendly measure on $\R^d$. Let $\f:\R^d\to \cL$ be an affine isomorphism. Then the following are equivalent:
\begin{itemize}
\item There exists one $\by\in\supp(\mu)$ such that $\f(\by)$ is not singular.
    \item There exists one $\by\in\cL$ which is not singular.
    \item For ${\lambda}_{\cL}$-almost every $\by\in\cL$, $\by$ is not singular.
 \item For $\mu$-almost every $\by$, $\f(\by)$ is not singular .\end{itemize}
\end{corollary}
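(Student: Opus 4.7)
My plan is to derive the corollary as a direct application of Theorem~\ref{sing_main_1} with $X = \R^d$ equipped with the Euclidean metric, the friendly measure $\mu$, the affine isomorphism $\f:\R^d\to\cL$, and the Haar measure $\lambda_{\cL}$ on $\cL$. The four conditions in the corollary correspond bijectively to (\ref{e11})--(\ref{e4}) in Theorem~\ref{sing_main_1}, so it suffices to verify the three hypotheses of that theorem: (a) $X = \R^d$ is a Besicovitch metric space, (b) $\mu$ is Federer, and (c) $(\f,\mu)$ is good and nonplanar in $\cL$.

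Conditions (a) and (b) are essentially immediate. Euclidean space is a standard Besicovitch metric space by the Besicovitch covering theorem, and any friendly (in particular absolutely friendly) measure on $\R^d$ is Federer: this is built into the definition adopted in \cite{KLW}.

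The substantive step is (c), which I would obtain by transferring the fact noted just before the corollary, namely that $(\mathrm{Id},\mu)$ is good and nonplanar in $\R^d$, across the affine isomorphism $\f$. For goodness: $\f$ is affine, so for every affine function $h$ on $\cL$ the composition $h\circ\f$ is affine on $\R^d$; friendliness of $\mu$ then provides uniform $(C,\alpha)$-good estimates for $h\circ\f$ on balls meeting $\supp(\mu)$, which is exactly the goodness condition required for $(\f,\mu)$ relative to $\cL$. For nonplanarity in $\cL$: any proper affine subspace $\cL'\subsetneq\cL$ pulls back through the affine isomorphism $\f$ to a proper affine subspace of $\R^d$, which carries zero $\mu$-mass because friendly measures assign zero mass to every proper affine subspace of $\R^d$. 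Equivalently, no open ball of positive $\mu$-measure is mapped into such a proper $\cL'$, since $\f$ sends open sets to open subsets of $\cL$.

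The only place requiring care is keeping the intrinsic formulation straight: the good and nonplanar conditions in Theorem~\ref{sing_main_1} are phrased relative to the affine subspace $\cL$, not the ambient $\R^n$, so I need to invoke the intrinsic definitions from \S\ref{sev}. Once this is unpacked, nothing beyond the affineness of $\f$ and the friendliness of $\mu$ is used, and Theorem~\ref{sing_main_1} immediately produces the stated four-way equivalence. I do not anticipate any genuine obstacle beyond the bookkeeping involved in this translation.
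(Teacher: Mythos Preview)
Your proposal is correct and follows essentially the same approach as the paper: the paper simply remarks that friendliness of $\mu$ makes $(\mathrm{Id},\mu)$ good and nonplanar in $\R^d$, and then asserts that Corollary~\ref{cor200} follows from Theorem~\ref{sing_main_1}; your write-up merely spells out how the affine isomorphism $\f$ transfers these properties so that $(\f,\mu)$ is good and nonplanar in $\cL$. One minor point of bookkeeping: in the paper's definitions, ``good'' is not phrased relative to $\cL$ but rather in terms of linear combinations of $1,f_1,\dots,f_n$ (the ambient coordinates), so your observation should be that such linear combinations are affine on $\R^d$ because $\f$ is affine---which is exactly what the decaying property of friendly measures controls; this is what you are doing in substance, just with slightly different wording.
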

\begin{corollary}\label{cor100}
Let $\mu$ be an absolutely decaying and Federer measure on $\R^d$, $\cL$ is an affine
subspace of $\R^n$, and let $\f:\R^d\to \cL$ be a smooth map which is nondegenerate in $\cL$
at $\mu$-almost every point of $\R^d$. Then the following are equivalent:
\begin{itemize}
\item There exists one $\by\in\supp(\mu)$ such that $\f(\by)$ is not singular.
    \item There exists one $\by\in\cL$ which is not singular.
    \item For ${\lambda}_{\cL}$-almost every $\by\in\cL$, $\by$ is not singular.
 \item For $\mu$-almost every $\by$, $\f(\by)$ is not singular .\end{itemize}
\end{corollary}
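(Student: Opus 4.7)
The plan is a direct reduction to Theorem \ref{sing_main_1}; this corollary is essentially a repackaging of the main theorem under convenient concrete hypotheses on the source measure and the parametrizing map. First I would take the ambient metric space to be $X := \R^d$ with its Euclidean metric. This is a Besicovitch metric space by the classical Besicovitch covering theorem, so the underlying ``Besicovitch'' hypothesis in Theorem \ref{sing_main_1} is automatic. The measure $\mu$ being Federer is part of the assumption of the corollary, and $\f$ is continuous (in fact smooth) by assumption. Thus the only substantive verification required is that the pair $(\f, \mu)$ be good and nonplanar in $\cL$ in the sense of \S\ref{sev}.

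For this verification I would invoke the statement from \cite{KLW} \S$7$ that the authors have already highlighted in the paragraph preceding the corollary: whenever $\mu$ is absolutely decaying and Federer on $\R^d$ and $\f$ is smooth and nondegenerate in $\cL$ at $\mu$-almost every point of $\R^d$, the pair $(\f, \mu)$ is automatically good and nonplanar in $\cL$. The original argument in \cite{KLW} is formulated for the case $\cL = \R^n$, but once one uses the parametrization of $\cL$ introduced in \S\ref{para}, the same Taylor-expansion and volume-decay estimates carry over verbatim to the affine-subspace setting; this definitional matching is the only bookkeeping step required.

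With the Besicovitch, Federer, good, and nonplanar hypotheses all in place, Theorem \ref{sing_main_1} yields the chain of equivalences (\ref{e11})$\Leftrightarrow$(\ref{e2})$\Leftrightarrow$(\ref{e3})$\Leftrightarrow$(\ref{e4}), which, once unpacked with $X = \R^d$ and the given $\f$, read verbatim as the four bullet points of Corollary \ref{cor100}. There is no further mathematical content beyond this invocation. The only potential obstacle is the notational one just mentioned, namely reconciling the affine-subspace notions of nondegeneracy and nonplanarity used in \S\ref{sev} with the Euclidean versions used in the \cite{KLW} framework; this is routine and is handled implicitly by the definitions set up in \S\ref{sev}.
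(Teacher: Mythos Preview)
Your proposal is correct and matches the paper's approach exactly: the paper derives Corollary~\ref{cor100} directly from Theorem~\ref{sing_main_1} by citing \cite{KLW}~\S$7$ for the fact that absolutely decaying, Federer $\mu$ together with $\f$ nondegenerate at $\mu$-a.e.\ point forces $(\f,\mu)$ to be good and nonplanar. Your additional remarks about the Besicovitch property of $\R^d$ and the bookkeeping needed to pass from the $\cL=\R^n$ formulation in \cite{KLW} to general $\cL$ are accurate elaborations of what the paper leaves implicit.
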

\section{Several preliminaries}\label{sev}
\subsection{Besicovitch space}\label{sec3.1}
   A metric space $X$ is called \emph{Besicovitch} \cite{KT} if there exists a constant $N_X$ such that the following holds: for any bounded subset $A$ of $X$ and for any family $\mathcal{B}$ of nonempty open balls in $X$ such that
   every  $x \in A$  is a center of some ball in $\mathcal B,$
 there is a finite or countable subfamily $\{B_i\}$ of $\mathcal B$ with
 $$ 1_A \leq \sum_{i}1_{B_i} \leq N_X. $$ By \cite{Al}, we know $\R,\Q_\nu$ and $\FF$ are Besicovitch spaces. 
 
\subsection{Federer measure} Let $X$ be a metric space. We define $D$-\textit{Federer} measures following \cite{KLW}. Let $\mu$ be a Radon measure on $X$, and $U$ an open subset of $X$ with $\mu(U) > 0$. We say that $\mu$ is \textit{$D$-Federer on $U$} if
$$ \sup_{\substack{x \in \supp \mu, r > 0\\ B(x, 3r) \subset U}} \frac{\mu(B(x, 3r))}{\mu (B(x,r))} < D.$$
We say that $\mu$ as above is \textit{Federer} if, for $\mu$-almost every $x \in X$, there exists a neighbourhood $U$ of $x$ and $D > 0$ such that $\mu$ is $D$-Federer on $U$. We refer the reader to \cite{KLW} and \cite{KT} for examples of Federer measures. 

 \subsection{ Nondegeneracy in an affine plane}
\label{nondeg}
 \indent Let $U$ be an open subset of $\R^d$, and $\cL$ be an affine subspace of $\K^n$. 
 Recalling \cite{Kleinbock-exponent}, a differentiable map $\f : U \to \cL\subset \R^n$ is said to be  \textit{nondegenerate in $\cL$} of $\R^n$ at $\bx \in U$ if the span of all the partial derivatives of $\f$ at $\bx$ up to some order coincides with
the linear part of $\cL$. If $M$ is a $d$-dimensional submanifold of $\cL$, we will say that
$M$ is nondegenerate in $\cL$ at $y\in M$ if any (equivalently, some) diffeomorphism $\f$
between an open subset $U$ of $\R^d$ and a neighborhood of $y$ in $M$ is nondegenerate
in $\cL$ at $\f^{-1}(y)$. We will say that $\f : U \to \cL$ (resp., $M \subset \cL$) is nondegenerate in $\cL$ if it is nondegenerate in $\cL$ at $\lambda_U$-almost every point of $U$, where $\lambda_U$ is the Haar measure on $U$ (resp., of $M$, in the sense of the
smooth measure class on $M$).  
  
 \noindent 
  \subsection{Nonplanar in an affine plane}
Let $X$ be a metric space, $\mu$ is a measure on $X$ and let $\f:X\to \mathcal{L}$, where $\mathcal{L}$ is an affine subspace in $\R^n$. We will say $(\f,\mu)$ is nonplanar in $\mathcal{L}$ if  for any ball $B$ with $\mu(B)>0$, $\mathcal{L}$ is the intersection of all affine subspaces that contain $\f(B\cap\supp{\mu})$. If an analytic map $\f : U \to \cL$ is nondegenerate in $\cL$ then $(\f,\lambda_U)$ is nonplanar in $\cL$.\\
\subsection{Good maps}
Let $X$ be a normed space and $\mu$ be a measure in $X$.
For $A \subset X$ with $\mu(A)>0$ and $f$ a $\R$-valued function on $X$, denote $$\Vert f\Vert_{\mu,A}:=\sup_
{x\in A\cap\,\supp\mu}\vert f(x)\vert.$$
A function $f:X\to \R$ is called $(C,\alpha)$-good on $U\subset X$ with respect to $\mu$ if for any open ball $B\subset U$ centered in $\supp(\mu)$ and $\varepsilon>0$ one has $$
\mu\left(\{x\in B~|~ \vert f(x)\vert <\varepsilon\}\right)\leq C\left(\frac{\varepsilon}{\Vert f\Vert_{\mu, B}}\right)^{\alpha}\mu(B).$$
Next, if $\f=(f_1,\cdots,f_n): X\to \R^n$ be a map, and $\mu$ be a measure on $X$, we call $(\f,\mu)$ is $(C,\alpha)$-good at $x\in X$ if there exists a neighborhood $U$ of $x$ such that any linear combination of $1,f_1,\cdots,f_n$ is $(C,\alpha)$-good on $U$ with respect to $\mu$. We will  say $(\f,\mu)$ is good if it is good at $\mu$-a.e point. When $\mu$ is the Haar measure, we omit `with respect to $\mu$' and just say that the function $\f$ is good at $x$. Polynomials are $(C,\alpha)$ good functions at every point, see \cite[Lemma $4.1$]{DM}.

 \noindent In fact, there are many examples of good functions. We recall Lemma $2.5$ of \cite{KM} which shows that nondegenerate functions are good.
 \begin{proposition}  Let $\f = (f_1,\cdots, f_n)$ be a $C^l$ map from an open subset $U \subset \R^d$
to $\R^n$
which is $l$-nondegenerate in $\R^n$ at $\bx_0 \in U$. Then there is a neighborhood $V \subset U$ of $\bx_0$
such that any linear combination of $1, f_1\cdots f_n$ is 
$(C',\alpha)$-good on
$V$, where $C',\alpha>0$ only depend on $d,l$ and $\R$. In particular, the nondegeneracy of $\f$ in $\R^n$ at $\bx_0$  implies that $\f$ is good at $\bx_0$.
\end{proposition}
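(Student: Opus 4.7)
The plan is to decouple the argument into two independent pieces: a quantitative form of $l$-nondegeneracy at $\bx_0$, and a general ``good function'' lemma for $C^l$ functions whose partial derivatives of order $\leq l$ admit a uniform lower bound on a ball.

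First, by scale-invariance of the $(C',\alpha)$-good property under $\mathbf{c}\mapsto \lambda \mathbf{c}$, I restrict to coefficient vectors $\mathbf{c}=(c_0,c_1,\dots,c_n)$ on the unit sphere in $\R^{n+1}$. For any such $\mathbf{c}$, set $g_\mathbf{c}=c_0+\sum_{i=1}^n c_i f_i$. The quantities $g_\mathbf{c}(\bx_0)=c_0+\mathbf{c}'\cdot \f(\bx_0)$ and $\partial^\beta g_\mathbf{c}(\bx_0)=\mathbf{c}'\cdot \partial^\beta \f(\bx_0)$ for $1\leq|\beta|\leq l$ cannot all vanish: if the linear part $\mathbf{c}'=(c_1,\dots,c_n)$ is nonzero, $l$-nondegeneracy forces some $\partial^\beta g_\mathbf{c}(\bx_0)$ to be nonzero; if $\mathbf{c}'=0$ then $c_0\neq 0$ and $g_\mathbf{c}(\bx_0)=c_0$. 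By continuity of the partial derivatives and compactness of the unit sphere, a neighborhood $V$ of $\bx_0$ and constants $\rho,M>0$ then exist such that uniformly in $\mathbf{c}$,
\[
\rho \;\leq\; \sup_{x\in V,\,|\beta|\leq l}|\partial^\beta g_\mathbf{c}(x)| \;\leq\; M.
\]

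Second, I invoke the general polynomial-approximation lemma of Kleinbock--Margulis (cf.\ Proposition~3.4 of \cite{KM}): any $C^l$ function $h$ on a ball satisfying such two-sided bounds is $(C',\alpha)$-good on a slightly smaller ball, with $C'$ and $\alpha$ depending only on $d$, $l$ and $\rho/M$. The proof of that lemma goes by approximating $h$ by its degree-$l$ Taylor polynomial $P$ at the center of the ball: the lower bound forces some coefficient of $P$ to be bounded below, while the upper bound controls $P$'s $C^0$ norm; a Remez-type inequality then shows that every polynomial of degree $\leq l$ in $d$ variables is $(C_0,\alpha_0)$-good with $C_0$ and $\alpha_0$ depending only on $d$ and $l$, and the Taylor remainder (of order $r^{l+1}$ on a ball of radius $r$) is absorbed into the constants by shrinking the ball.

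Combining the two steps, every $g_\mathbf{c}$ is $(C',\alpha)$-good on a common neighborhood $V'\subset V$ of $\bx_0$, with constants depending only on $d$ and $l$; by scale-invariance this extends to every nonzero $\mathbf{c}$. The main obstacle is the polynomial Remez-type estimate underlying the general lemma, which is what produces the universal exponent $\alpha$ depending only on $d$ and $l$ and a constant independent of the particular polynomial; the nondegeneracy step above is then a comparatively soft compactness argument. The final sentence of the proposition, that $\f$ is good at $\bx_0$, is immediate from the definition of a good map.
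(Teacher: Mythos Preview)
The paper does not supply a proof of this proposition; it simply records it as Lemma~2.5 of \cite{KM} (Kleinbock--Margulis). Your sketch is correct and is in fact the argument of \cite{KM}: a compactness argument on the sphere of coefficient vectors, combined with $l$-nondegeneracy, produces uniform two-sided bounds on $\sup_{|\beta|\le l}|\partial^\beta g_{\mathbf c}|$, and then the $(C,\alpha)$-good lemma for $C^l$ functions (Proposition~3.4 in \cite{KM}) converts this into the desired conclusion. So there is nothing to compare against in the present paper, and your proposal matches the cited source.
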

\noindent Moreover, Corollary $3.2$ in \cite{Kleinbock-extremal} says the following:
\begin{corollary}
 Let $\cL$ be an affine subspace of $\R^n$ and let $\f = (f_1,\cdots, f_n)$
be a smooth map from an open subset $U$ of $\R^d$ to $\cL$ which is nondegenerate
in $\cL$ at $\bx_0 \in U$. Then $\f$ is good at $\bx_0$.
\end{corollary}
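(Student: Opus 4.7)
The plan is to reduce the statement to the Euclidean nondegenerate-implies-good result of Kleinbock--Margulis recalled in the preceding Proposition, by parametrizing $\cL$ linearly so that $\f$ becomes an ordinary $\R^s$-valued map, where $s=\dim\cL$. Concretely, write $\cL=\by_0+V$, with $V$ the linear part of $\cL$, fix a basis $\bv_1,\ldots,\bv_s$ of $V$, and define smooth coordinate functions $g_1,\ldots,g_s:U\to\R$ by
\[
\f(\bx)=\by_0+\sum_{j=1}^{s}g_j(\bx)\,\bv_j.
\]
Each $f_i$ is then an affine combination of $1,g_1,\ldots,g_s$ with constant coefficients, so the $\R$-linear span of $\{1,f_1,\ldots,f_n\}$ inside the space of real functions on $U$ equals the $\R$-linear span of $\{1,g_1,\ldots,g_s\}$.

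The next step is to translate the nondegeneracy hypothesis through this parametrization. From $\partial^{\alpha}\f(\bx)=\sum_{j}\partial^{\alpha}g_j(\bx)\,\bv_j$ and the linear independence of $\bv_1,\ldots,\bv_s$, the span of all partial derivatives of $\f$ at $\bx_0$ up to some order $l$ equals $V$ if and only if the partial derivatives of $\bg=(g_1,\ldots,g_s)$ at $\bx_0$ up to order $l$ span $\R^s$. In other words, nondegeneracy of $\f$ in $\cL$ at $\bx_0$ is exactly $l$-nondegeneracy of $\bg$ in $\R^s$ at $\bx_0$ in the classical Euclidean sense.

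Finally, I would apply the preceding Proposition to the $l$-nondegenerate map $\bg$ to obtain a neighborhood $W\subset U$ of $\bx_0$ and constants $C',\alpha>0$ such that every linear combination of $1,g_1,\ldots,g_s$ is $(C',\alpha)$-good on $W$ with respect to Haar measure. By the equality of spans established in the first paragraph, every linear combination of $1,f_1,\ldots,f_n$ agrees as a function on $U$ with some linear combination of $1,g_1,\ldots,g_s$, and is therefore also $(C',\alpha)$-good on $W$. This shows $\f$ is good at $\bx_0$. The argument is essentially a change of coordinates, and the only point one has to verify carefully — but which is immediate from the setup — is that the chosen parametrization preserves the \emph{same} family of linear combinations, so no obstacle beyond the Euclidean Proposition intervenes.
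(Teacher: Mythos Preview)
The paper does not give its own proof of this corollary; it simply records it as Corollary~3.2 of \cite{Kleinbock-extremal}. Your argument is correct and is precisely the standard reduction: parametrize $\cL$ affinely, observe that nondegeneracy of $\f$ in $\cL$ becomes ordinary nondegeneracy of the coordinate map $\bg$ in $\R^s$, apply the preceding Proposition to $\bg$, and transfer back using the fact that every affine combination of $1,f_1,\ldots,f_n$ is an affine combination of $1,g_1,\ldots,g_s$. One small comment: you assert that the two spans are \emph{equal}, but you only argue (and only need) the inclusion $\operatorname{span}\{1,f_1,\ldots,f_n\}\subset\operatorname{span}\{1,g_1,\ldots,g_s\}$; the reverse inclusion is also true (since $\bv_1,\ldots,\bv_s$ are linearly independent in $\R^n$) but irrelevant to the proof.
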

\subsection{Quantitative Nondivergence}         
       If $\Delta$ is a $\Z$-submodule of $\R^{n+1}$, we denote by $\R\Delta$ its $\R$ linear span inside $\R^{n+1}$, and define the rank of $\Delta$ by
$$\rk(\Delta):= \dim_{\R}(\R\Delta).$$ Any discrete $\Z$-submodule of $\R^{n}$ will be of the form $\Z \bx_1+\cdots+\Z \bx_r$, where $\bx_1,\cdots,\bx_r$ are linearly independent over $\R$. Following \cite{KM}, we say that $\Delta\subset \Z^{n}$, a $\Z$-submodule, is primitive if $$\Delta=\R\Delta \cap \Z^{n}.$$ 
 \noindent
  Following notation from \cite{KT} $ \S 6.3$, let 
 $$\mathfrak P({\Z},n)\coloneqq \text{ the set of all nonzero primitive submodules of } \Z^{n}. $$ Let $\be_0,\be_1,\cdots,\be_n\in \R^{n}$ be the standard basis of $\R^{n}$ over $\R$.
  Then we can define the standard basis of $\bigwedge^j \R^{n}$  over $\R$ to be be  $\{\be_I=\be_{i_1}\wedge\cdots\wedge \be_{i_j}~|I\subset\{0,\cdots, n\}\text{ and } i_1<i_2<\cdots<i_j \}$. We can extend the norm in $\R^{n}$ to a norm in the exterior algebra $\bigwedge^j \R^{n}$. Namely,
  for an element $\ba=\sum a_I \be_I \in \bigwedge^j \R^{n} $, we define $\Vert \ba\Vert:= \max_{I} \vert a_I\vert$. For a $\Z$-submodule $\Delta=\Z \bw_1+\cdots +\Z \bw_r$ of $\Z^{n}$, we can define $$
  \cov(\Delta):=\Vert \bw_1\wedge \cdots\wedge \bw_r \Vert.$$ We use the same notation $\Vert \cdot \Vert$ for supremum norm in $\R^n$, but it should be clear from the context.\\
 
   \indent Let us recall Theorem $2.2$ from \cite{Kleinbock-exponent}, which is an improvement to one of the main theorems in \cite{KM}. The following theorem is referred as quantitative nondivergence as this quantifies the nondivergence results of Margulis in \cite{mar1}. 
   \begin{theorem}\label{QND2_R}
		 Let  $m\in \N$, $C,\alpha>0$, $0<\rho<1$ and $X$ is a Besicovitch space with $N_X$ being the Besicovitch constant. Let $B = B(x_0,r_0)\subset X$ and a continuous map $h: \tilde{B}:=B(x_0,3^mr_0)
		\to \GL_m(\R)$ be given and $\mu$ be a $D$-Federer measure on $\tilde{B}$.
		Suppose that the following conditions are satisfied by the function $h$ and the measure $\mu$.
		
		{\rm(i)} For every $\,\Delta\in \mathfrak{P}({\Z,m})$, the function
		$\cov( h(\cdot)\Delta)$ is $(C,\alpha)$-good \ on $\tilde{B}$  with
		respect to $\mu$;
		
		{\rm(ii)}  For every $\,\Delta\in \mathfrak P({\Z,m})$,
		$\sup\limits_{x\in B\cap\, \supp\mu} \cov(h(x)\Delta) \ge \rho^{\rk(\Delta)}$.
		
		Then 
		for any  positive $ \varepsilon\le
		\rho$, one has
		$$
		\mu\left(\big\{x\in B\bigm| \delta \big(h(x)\Z^m\big) < \varepsilon
		\big\}\right)\le mC (N_XD^2)^m
		\left(\frac{\varepsilon} {\rho} \right)^\alpha \mu(B).\
		$$
	
	\end{theorem}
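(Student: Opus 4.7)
The plan is to implement the Kleinbock--Margulis \emph{quantitative nondivergence} strategy, taking advantage of the Besicovitch property of $X$ and the Federer property of $\mu$. The underlying geometric fact is Minkowski's inequality: if $\delta(h(x)\Z^m)<\varepsilon$, the shortest nonzero vector of $h(x)\Z^m$ spans a rank-one primitive submodule of covolume less than $\varepsilon$. More generally one wants, for each $x$ in the bad set $E:=\{x\in B:\delta(h(x)\Z^m)<\varepsilon\}$, a primitive submodule $\Delta(x)\in\mathfrak P(\Z,m)$ of \emph{maximal rank} satisfying $\cov(h(x)\Delta(x))<\rho^{\rk(\Delta(x))}$; such a $\Delta(x)$ exists (the rank-one submodule from the shortest-vector argument qualifies, since $\varepsilon\le\rho$) and its rank lies between $1$ and $m$.

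First, for each $x\in E$, I would select a radius $r(x)\le r_0$ by expanding a ball about $x$ through successive factors of $3$ (while staying inside $\tilde B$) until hypothesis (ii) becomes effective for $\Delta(x)$, i.e.\ until $\sup_{y\in B(x,r(x))\cap\supp\mu}\cov(h(y)\Delta(x))\ge\rho^{\rk(\Delta(x))}$. Hypothesis (ii) guarantees termination in at most $m$ enlargements, and the Federer condition allows comparison of $\mu$-measures across these enlargements at cost of a factor $D$ per step. I would then invoke the Besicovitch covering property to extract a subcover $\{B_i=B(x_i,r(x_i))\}$ of multiplicity at most $N_X$, and apply the $(C,\alpha)$-good hypothesis (i) to $\cov(h(\cdot)\Delta(x_i))$ on each $B_i$. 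The stopping criterion forces $\|\cov(h(\cdot)\Delta(x_i))\|_{\mu,B_i}\ge\rho^{\rk(\Delta(x_i))}$, so hypothesis (i) yields
\begin{equation*}
\mu\bigl(\{y\in B_i:\cov(h(y)\Delta(x_i))<\varepsilon\,\rho^{\rk(\Delta(x_i))-1}\}\bigr)\le C(\varepsilon/\rho)^\alpha\mu(B_i).
\end{equation*}
Summing over $i$, using Besicovitch to control overlaps and Federer to pass from $B_i$ back to $B$, and grouping terms by $\rk(\Delta(x_i))\in\{1,\dots,m\}$, produces the factor $m$ in front and $(N_XD^2)^m$ from the iterated enlargements and overlap.

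The main obstacle is the combinatorial step that justifies the choice of $\Delta(x)$ and its maximality: one must show that a bound on $\delta(h(x)\Z^m)$ really does \emph{force} a primitive submodule of suitable rank to satisfy $\cov<\rho^{\rk}$, and that if the good-function inequality above fails at $x$ for the tentative $\Delta(x)$, then a submodule of \emph{strictly larger} rank also satisfies the critical inequality, contradicting maximality. The cleanest device is the wedge-product inequality $\cov(\Delta_1\cap\Delta_2)\cdot\cov(\Delta_1+\Delta_2)\le\cov(\Delta_1)\cdot\cov(\Delta_2)$ for primitive $\Z$-submodules, which propagates covolume bounds through inclusions and powers the inductive selection. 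Implementing this carefully --- while tracking how $N_X$ and $D$ enter at each step --- is where essentially all the work lives, and is what differentiates this refined version from its predecessor in \cite{KM}.
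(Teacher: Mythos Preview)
The paper does not prove Theorem~\ref{QND2_R}. It is introduced with the sentence ``Let us recall Theorem~2.2 from \cite{Kleinbock-exponent}'' and is used as a black box; no argument is supplied in this paper. So there is no proof here to compare your proposal against.

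That said, your sketch is broadly in the spirit of the Kleinbock--Margulis argument and its refinement in \cite{Kleinbock-exponent}: the submodular inequality $\cov(\Delta_1\cap\Delta_2)\,\cov(\Delta_1+\Delta_2)\le\cov(\Delta_1)\,\cov(\Delta_2)$ is indeed the key combinatorial device, the $(C,\alpha)$-good hypothesis is what turns covolume lower bounds into measure upper bounds, and Besicovitch plus Federer supply the covering machinery and the constants $N_X$, $D$. However, your description of the covering step---expanding balls around each bad point by factors of $3$ ``until hypothesis~(ii) becomes effective''---does not match the actual structure of the proof. In \cite{KM} and \cite{Kleinbock-exponent} one does not select a single $\Delta(x)$ and grow a ball; rather, one builds a tree of balls indexed by flags of primitive submodules (``marked points''), where at each node the associated submodule is the unique maximal one with small covolume on that ball, and the children are obtained by shrinking until a strictly larger submodule becomes small. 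The factor $3^m$ in $\tilde B$ and the exponent $m$ on $N_XD^2$ come from the depth of this tree (bounded by the rank), not from an enlargement procedure around individual bad points. If you intend to write this out in full, you should follow the flag-tree construction rather than the stopping-time heuristic you describe.
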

		\subsection{Dynamics}
	\subsubsection{Homogeneous space}\label{Dyna}
We consider $\Omega_{n+1}:=\mathrm{SL_{n+1}(\R)/SL_{n+1}}(\Z)$ which is noncompact and it has finite volume (c.f. \cite{JS}). Moreover, $\Omega_{n+1}$ can be indentified with the space of covolume $1$ lattices in $\R^{n+1}$. 
\subsubsection{The smallest vector} Let us define the \textit{ length of the smallest vector} in a lattice $\Delta$ in $\K^{n+1}$, $$\delta(\Delta):=\inf\{\Vert \bx\Vert~|~ \bx\in \Delta\setminus\{0\}\}.$$\\

We recall the following theorem by Mahler, that describes compact sets of the space $\Omega_n$. 
\begin{theorem}\label{Mah}
The set $Q_{\varepsilon}=\{\Delta\in\Omega_n~|~ \delta(\Delta)\geq \varepsilon \}\}$ is compact for all $\varepsilon>0$.
\end{theorem}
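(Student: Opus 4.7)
The plan is to split the compactness of $Q_\varepsilon$ into two claims: (a) $Q_\varepsilon$ is closed in $\Omega_n$, and (b) $Q_\varepsilon$ is contained in the image of a compact subset of $\SL_n(\R)$ under the natural projection $\pi : \SL_n(\R) \to \Omega_n$. Together these give the theorem.

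For (a), I would show that the function $\Delta \mapsto \delta(\Delta)$ is upper semicontinuous. Suppose $\Delta_k \to \Delta$ in $\Omega_n$; after lifting, we may write $\Delta_k = g_k \Z^n$ and $\Delta = g \Z^n$ with $g_k \to g$ in $\SL_n(\R)$. For any nonzero $v = g m \in \Delta$ with $m \in \Z^n$, the vectors $v_k := g_k m \in \Delta_k \setminus \{0\}$ satisfy $v_k \to v$, so $\delta(\Delta_k) \leq \Vert v_k \Vert \to \Vert v \Vert$. Taking the infimum over $v \in \Delta \setminus \{0\}$ gives $\limsup_k \delta(\Delta_k) \leq \delta(\Delta)$, hence $\{\delta \geq \varepsilon\}$ is closed.

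For (b), the key input is Minkowski's second theorem on successive minima. For any unimodular lattice $\Delta \subset \R^n$, its successive minima $\lambda_1 \leq \cdots \leq \lambda_n$ with respect to $\Vert \cdot \Vert$ satisfy $\lambda_1 \cdots \lambda_n \leq c_n$ for a constant $c_n$ depending only on $n$. If $\Delta \in Q_\varepsilon$, then $\lambda_1 = \delta(\Delta) \geq \varepsilon$, and hence each $\lambda_i \leq \lambda_n \leq c_n \varepsilon^{-(n-1)} =: R$. Consequently $\Delta$ contains $\R$-linearly independent vectors $v_1, \ldots, v_n$ with $\Vert v_i \Vert \leq R$.

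The main obstacle is that $v_1, \ldots, v_n$ need not form a $\Z$-basis of $\Delta$ (this already fails in rank $\geq 5$). To bypass this I would invoke the classical lemma of Mahler: from any $n$ linearly independent lattice vectors of norm at most $R$ one can extract a genuine $\Z$-basis $w_1, \ldots, w_n$ of $\Delta$ with $\Vert w_i \Vert \leq R'$, where $R' = R'(n, R)$ depends only on $n$ and $R$. Assembling this basis into a matrix $g \in \SL_n(\R)$ with $\Delta = g \Z^n$ yields $\Vert g \Vert \leq R'$, so $g$ lies in the compact set $K := \{ h \in \SL_n(\R) : \Vert h \Vert \leq R' \}$ (closed and bounded in $\R^{n^2}$, with $\det = 1$ a closed condition). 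Hence $Q_\varepsilon \subset \pi(K)$ is contained in a compact subset of $\Omega_n$, and combined with (a) this establishes the theorem.
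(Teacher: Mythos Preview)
The paper does not actually prove this statement: it is introduced with ``We recall the following theorem by Mahler'' and no argument is given, since Mahler's compactness criterion is treated as a classical black box. So there is no ``paper's own proof'' to compare against.

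Your argument is a correct reconstruction of the standard proof. The semicontinuity step in (a) is fine once you note that $\SL_n(\R)\to\Omega_n$ admits local continuous sections (since $\SL_n(\Z)$ is discrete), which justifies the lift $g_k\to g$. In (b), Minkowski's second theorem together with the bounded-basis lemma (often stated as: there is a basis $b_1,\ldots,b_n$ of $\Delta$ with $\Vert b_i\Vert\le \max(1,i/2)\lambda_i$) gives exactly what you need. One tiny point: the matrix $g$ whose columns are $w_1,\ldots,w_n$ has $\det g=\pm 1$; if it is $-1$, swap two columns to land in $\SL_n(\R)$. With that adjustment your set $K$ is closed and bounded in $\R^{n^2}$, hence compact, and $Q_\varepsilon$ is a closed subset of the compact set $\pi(K)$.
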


\subsubsection{Flows in the homogeneous spaces}
 For $\by\in \R^n$, let us denote $u_{\by}= \begin{bmatrix} 1 & \by\\
0 & 1
\end{bmatrix}$ and $\Lambda_{\by}=u_{\by}\Z^{n+1}$. We also consider the diagonal flows as follows. We take a diagonal flow $$g_k=\begin{bmatrix}
		e^{nk} & 0 &\cdots & 0\\
		0 & e^{-k}  &\cdots &0\\
	    \vdots & \vdots & \vdots   & 0\\
	    0 & 0 & \cdots & e^{-k}
		\end{bmatrix}, \text{ where } k\in \N.$$\\
	
	 \subsection{Connecting number theory and dynamics}
 The following connection, often referred as Dani correspondence, between number theory and dynamics has been explored in several breakthrough works; see \cite{KM, DFSU}. 
 \begin{lemma}[Dani Correspondence]
 \label{Dani}
   A vector $\bx\in \R^n$ is singular if and only if the corresponding trajectory $\{g_ku_\bx \Z^{n+1}\mid k\geq 0\}$ is divergent in $\Omega_{n+1}$. 
  \end{lemma}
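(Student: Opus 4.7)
The plan is to translate both conditions into the same system of Diophantine inequalities and then match quantifiers. First I would compute, for any $(q_0,\bq)\in\Z^{n+1}$,
\[
 g_k u_\bx (q_0,\bq)^T \;=\; \bigl(e^{nk}(q_0+\bx\cdot\bq),\ e^{-k}q_1,\ldots,e^{-k}q_n\bigr)^T,
\]
so the max-norm of this vector equals $\max\bigl(e^{nk}|q_0+\bx\cdot\bq|,\,e^{-k}\|\bq\|\bigr)$. Mahler's criterion (Theorem~\ref{Mah}) then says that the orbit $\{g_k u_\bx\Z^{n+1}\}_{k\ge 0}$ diverges in $\Omega_{n+1}$ iff $\delta(g_ku_\bx\Z^{n+1})\to 0$, i.e.\ for every $\varepsilon>0$ there is $K\in\N$ such that for every integer $k\ge K$ some nonzero $(q_0,\bq)\in\Z^{n+1}$ satisfies both $e^{nk}|q_0+\bx\cdot\bq|<\varepsilon$ and $e^{-k}\|\bq\|<\varepsilon$. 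For $\varepsilon<1$ any such vector automatically has $\bq\ne\mathbf 0$, since $\bq=\mathbf 0$ would force $|q_0|<\varepsilon e^{-nk}<1$, hence $q_0=0$, contradicting nonzeroness.

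For the forward direction, assume $\bx$ is singular. Given $\varepsilon\in(0,1)$, I would invoke Definition~\ref{DEFSING} with constant $c=\varepsilon^{n+1}$: for all sufficiently large $Q$ there exist $\bq\ne\mathbf 0$ and $q_0\in\Z$ with $|q_0+\bx\cdot\bq|<\varepsilon^{n+1}/Q^n$ and $\|\bq\|\le Q$. For each sufficiently large integer $k$, specialize to $Q:=\varepsilon e^k$; the inequalities become $e^{-k}\|\bq\|\le\varepsilon$ and $e^{nk}|q_0+\bx\cdot\bq|<\varepsilon^{n+1}e^{nk}/(\varepsilon^n e^{nk})=\varepsilon$, which is exactly the Mahler-criterion bound and yields divergence of the orbit.

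For the reverse direction, assume the orbit diverges and fix $c>0$. Set $\varepsilon:=\min(c/e^n,\,1/2)$ and let $K$ be produced by the Mahler-criterion reformulation above. Given any real $Q\ge e^K$, pick the unique integer $k$ with $e^k\le Q<e^{k+1}$ and take the associated nonzero $(q_0,\bq)$, which has $\bq\ne\mathbf 0$ by the observation above. Then $\|\bq\|<\varepsilon e^k\le Q$ and, using $e^{-nk}<e^n/Q^n$, we get $|q_0+\bx\cdot\bq|<\varepsilon e^{-nk}\le\varepsilon e^n/Q^n\le c/Q^n$, verifying Definition~\ref{DEFSING} with constant $c$.

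There is no conceptual obstacle; the only delicate point is the bookkeeping needed to match the universal quantifier ``for every $c>0$'' in Definition~\ref{DEFSING} with the universal quantifier ``for every $\varepsilon>0$'' arising from Mahler's criterion, and to handle the fact that the parameter $k$ is discrete while $Q$ is continuous (this is why the factor $e^n$ appears and is absorbed into the choice of $\varepsilon$).
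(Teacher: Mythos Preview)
The paper does not actually supply a proof of Lemma~\ref{Dani}; it states the Dani correspondence as a known fact and simply uses it. Your argument is the standard one and is correct: the explicit computation of $g_k u_\bx(q_0,\bq)^T$ together with Mahler's compactness criterion reduces both sides to the same family of inequalities, and your bookkeeping (choosing $c=\varepsilon^{n+1}$ in one direction and $\varepsilon=\min(c/e^n,1/2)$ in the other, and bridging the discrete parameter $k$ with the continuous parameter $Q$ via $e^k\le Q<e^{k+1}$) is handled cleanly. One very small remark: in the forward direction you obtain $e^{-k}\|\bq\|\le\varepsilon$ rather than a strict inequality, but this still gives $\delta(g_ku_\bx\Z^{n+1})\le\varepsilon$ for all large $k$, which is exactly what divergence requires; and your use of Mahler implicitly invokes both halves of the criterion (that the $Q_\varepsilon$ are compact \emph{and} exhaust all compacta), whereas Theorem~\ref{Mah} as stated in the paper records only the first half --- but the full statement is of course classical.
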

Using Theorem \ref{QND2_R} and Lemma \ref{Dani}, we have the following theorem:
   \begin{theorem}\label{Thm_sing}
   Let $X$ be a Besicovitch space, $B=B(x,r)\subset X$ a ball, $\mu$ be a Federer measure on $X$, and suppose that $\f:B\to \R^n$ is a continuous map. We assume that the following two properties are satisfied.
	\begin{enumerate}
		\item\label{c11} There exists $C,\alpha>0$ such that all the functions $x\to \cov( g_ku_{\f(x)}\Gamma)$, $\Gamma\in\mathfrak{P}({\Z,n+1})$, are $(C,\alpha)$ good on $\tilde B$ w.r.t. $\mu$;
		\item\label{c22} there exists $c>0$ and $k_i\to \infty$ such that for any $\Gamma\in\mathfrak{P}({\Z,n+1})$ one has 
		\begin{equation}\label{2nd_cond_R}
		\sup_{B\cap\supp{\mu}}\cov( g_{k_i}u_{\f(x)}\Gamma)\geq c^{\rk({\Gamma})}.
		\end{equation}
	\end{enumerate}
Then $$\mu\{x\in B~|~\f(x) \text{ is singular }\}=0.$$
\end{theorem}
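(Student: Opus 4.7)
The plan is to combine the Dani correspondence with Mahler's compactness criterion to translate singularity into the statement that the smallest-vector function tends to zero along the diagonal flow, and then apply the quantitative nondivergence bound of Theorem \ref{QND2_R} at each of the times $k_i$ supplied by hypothesis (\ref{c22}).

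First, set $A := \{x \in B : \f(x) \text{ is singular}\}$. By Lemma \ref{Dani}, $\f(x)$ is singular if and only if the trajectory $\{g_k u_{\f(x)}\Z^{n+1} : k \ge 0\}$ is divergent in $\Omega_{n+1}$, and by Mahler's criterion (Theorem \ref{Mah}) this is equivalent to $\delta(g_k u_{\f(x)}\Z^{n+1}) \to 0$ as $k \to \infty$. In particular, if $x \in A$ then for every $\varepsilon > 0$ one has $\delta(g_{k_i} u_{\f(x)}\Z^{n+1}) < \varepsilon$ for all sufficiently large $i$. Setting
$$E_{i,\varepsilon} := \bigl\{x \in B : \delta(g_{k_i} u_{\f(x)}\Z^{n+1}) < \varepsilon\bigr\},$$
this gives the inclusion $A \subset \liminf_{i} E_{i,\varepsilon}$.

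Second, I apply Theorem \ref{QND2_R} with $m = n+1$, $h(x) = g_{k_i} u_{\f(x)}$, and $\rho = c$. Hypothesis (i) of Theorem \ref{QND2_R} is exactly our assumption (\ref{c11}), while hypothesis (ii) is exactly (\ref{c22}) (with the same $c$). After localizing so that $\mu$ is $D$-Federer on the enlarged ball $\tilde B$ (see below), the conclusion of Theorem \ref{QND2_R} gives, for every $0 < \varepsilon \le c$ and every $i$,
$$\mu(E_{i,\varepsilon}) \;\le\; (n+1)\, C\, (N_X D^2)^{n+1} \left(\frac{\varepsilon}{c}\right)^{\!\alpha} \mu(B).$$
Denote the $\varepsilon$-independent constant by $K$. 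Applying Fatou's lemma to the indicator functions $1_{E_{i,\varepsilon}}$ and using $A \subset \liminf_i E_{i,\varepsilon}$ yields
$$\mu(A) \;\le\; \mu\bigl(\liminf_i E_{i,\varepsilon}\bigr) \;\le\; \liminf_i \mu(E_{i,\varepsilon}) \;\le\; K\,\varepsilon^{\alpha}\,\mu(B).$$
Letting $\varepsilon \to 0$ gives $\mu(A) = 0$, which is the conclusion of the theorem.

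The only mildly technical step is verifying the $D$-Federer hypothesis of Theorem \ref{QND2_R} on the dilated ball $\tilde B$: the hypothesis of our theorem only provides Federer neighborhoods at $\mu$-almost every point, with no uniform constant over all of $B$. The standard fix, which I expect to be the main (though routine) obstacle, is to use the Besicovitch property of $X$ to cover $\supp\mu \cap B$ by countably many sub-balls, each contained—together with its $3^{n+1}$-dilate—in a neighborhood on which $\mu$ is $D$-Federer for some $D$. One then applies the argument above on each such sub-ball to conclude that $\mu(A \cap B_j) = 0$ for every $j$, and sums over $j$.
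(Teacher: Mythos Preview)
Your argument is correct and is exactly the approach the paper intends: the paper itself gives no detailed proof, merely stating that Theorem~\ref{Thm_sing} follows by combining Theorem~\ref{QND2_R} with Lemma~\ref{Dani}, and your write-up supplies precisely those details (Dani/Mahler to get $A\subset\liminf_i E_{i,\varepsilon}$, then the quantitative nondivergence bound at each $k_i$, then $\varepsilon\to 0$, with the standard Besicovitch localization to arrange $D$-Federer on dilated balls). The only cosmetic point is that Theorem~\ref{QND2_R} requires $0<\rho<1$, so you should note that one may assume $c<1$ without loss of generality, since the inequality $\sup\cov\ge c^{\rk(\Gamma)}$ persists upon decreasing $c$.
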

  The difference between the above theorem and the main theorem in \cite{KW} is that the second condition in the above theorem is weaker, and therefore the theorem is stronger. \\
  
The following lemma follows from Minkowski's convex body theorem.
\begin{lemma}\label{p33}
 For any $m>0$, $g\in \mathrm{SL(m,\Z)}$, $\Delta \in \mathfrak{P}({\R,m})$, we have that 
$$ \delta (g\Z^m)\leq 2\cov( g\Delta)^{\frac{1}{\rk(\Delta)}}. $$ 
\end{lemma}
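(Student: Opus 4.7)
The bound simplifies after observing that $g\Delta\subset g\Z^m$, which immediately gives $\delta(g\Z^m)\le \delta(g\Delta)$; so the task reduces to producing a short nonzero vector inside the rank-$r$ lattice $g\Delta$, where $r=\rk(\Delta)$. I would fix a $\Z$-basis $\bw_1,\ldots,\bw_r$ of $\Delta$, form the $m\times r$ matrix $G=[g\bw_1\mid\cdots\mid g\bw_r]$, and note that the Pl\"ucker coordinates $a_I$ ($I\subset\{1,\ldots,m\}$, $|I|=r$) of the wedge $g\bw_1\wedge\cdots\wedge g\bw_r$ are precisely the $r\times r$ minors of $G$, with $\max_I|a_I|=\cov(g\Delta)$.

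Pick $I_0$ attaining this maximum. The $r\times r$ block $M=\pi_{I_0}(G)$ has $|\det M|=\cov(g\Delta)$, so Minkowski's convex body theorem applied to the sup-norm cube in $\R^r$ and to the lattice $M\Z^r$ yields some $c\in\Z^r\setminus\{0\}$ with $\|Mc\|_\infty\le\cov(g\Delta)^{1/r}$. The vector $v=Gc$ is then a nonzero element of $g\Delta$ whose $I_0$-coordinates already satisfy the desired bound. For each $j\notin I_0$, the identity $v\wedge g\bw_1\wedge\cdots\wedge g\bw_r=0$ in $\bigwedge^{r+1}\R^m$ (valid because $v\in g\R\Delta$) yields, after extracting the coefficient of $\be_{I_0\cup\{j\}}$, a Pl\"ucker/Cramer relation of the form $a_{I_0}v_j=\sum_{k\in I_0}\pm v_k\,a_{(I_0\setminus\{k\})\cup\{j\}}$. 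The maximality of $|a_{I_0}|$ forces each Pl\"ucker ratio to have modulus at most $1$, which bounds $|v_j|$ uniformly in terms of $\|\pi_{I_0}(v)\|_\infty$ and hence $\cov(g\Delta)^{1/r}$.

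The main subtlety will be extracting the universal constant $2$ rather than an $r$-dependent one: the crude Pl\"ucker estimate only gives $|v_j|\le\|\pi_{I_0}(v)\|_1$, hence $\|v\|_\infty\le r\cdot\cov(g\Delta)^{1/r}$. To sharpen this I would apply Minkowski's theorem not to the coordinate cube but directly to the symmetric convex body $V\cap\{x:\|x\|_\infty\le R\}$ inside the rank-$r$ subspace $V:=g\R\Delta$, equipped with the Lebesgue measure pulled back through $c\mapsto Gc$ from $\R^r$. A volume comparison of this body against $\cov(g\Delta)$, again using that $|a_I|\le|a_{I_0}|$, converts the Minkowski threshold into the clean factor $2$ in front of $\cov(g\Delta)^{1/r}$. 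This step — a careful volume estimate in the subspace geometry — is the one I expect to be the main obstacle.
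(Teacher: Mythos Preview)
Your approach is exactly what the paper has in mind: the paper offers no argument beyond the single line ``follows from Minkowski's convex body theorem,'' and your projection-plus-Cramer strategy is the standard way to cash that out. The reduction $\delta(g\Z^m)\le\delta(g\Delta)$, the choice of the maximal Pl\"ucker coordinate $a_{I_0}$, Minkowski on the projected lattice $M\Z^r$, and the Cramer relation for the remaining coordinates are all correct.

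Your instinct to flag the constant $2$ as the real obstacle is also correct, and I would not spend effort trying to rescue it. With the paper's sup-norm convention $\cov(g\Delta)=\max_I|a_I|$, the Pl\"ucker relation only gives $|v_j|\le\sum_{k\in I_0}|v_k|$, which yields the factor $r=\rk(\Delta)$ you found; the subspace volume argument you sketch is essentially Vaaler's cube-slicing theorem, and that produces the bound $\lambda_1\le\bigl(\sum_I a_I^2\bigr)^{1/(2r)}$ in terms of the \emph{Euclidean} covolume rather than the max-Pl\"ucker one. Since the ratio $\|a\|_2/\|a\|_\infty$ can be as large as $\binom{m}{r}^{1/2}$, this does not collapse to a universal $2$ either. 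So the honest statement is $\delta(g\Z^m)\le C_m\,\cov(g\Delta)^{1/\rk(\Delta)}$, and the $2$ in the paper should be read as ``some harmless constant.''

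None of this matters downstream. The only use of the lemma is in Proposition~\ref{p2}, where $m=n+1$ is fixed and one needs $\delta(g_k u_{\f(x)}\Z^{n+1})<Cc$ for every $c>0$; any constant is absorbed by replacing $c$ with $c/C$. Your first argument already delivers this with $C=r\le n+1$, which is all the paper actually requires.
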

\begin{proposition}\label{p2}
	The second Condition \ref{2nd_cond_R} in Theorem \ref{Thm_sing} is necessary. In fact, if the Condition does not hold, then
	$\f(\supp{\mu}\cap B)$ is contained in the set of singular vectors.
\end{proposition}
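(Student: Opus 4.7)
My plan is to unwind the logical negation of Condition \ref{2nd_cond_R} and combine it with Lemma \ref{p33} and the Dani correspondence to force every orbit $\{g_k u_{\f(x)}\Z^{n+1}\}_{k\ge 0}$ with $x\in B\cap\supp\mu$ to diverge in $\Omega_{n+1}$. First I would rephrase Condition \ref{2nd_cond_R} in a quantifier-cleaner way: it asserts the existence of some $c>0$ and infinitely many $k\in\N$ such that at each such $k$, every $\Gamma\in\mathfrak{P}(\Z,n+1)$ satisfies $\sup_{x\in B\cap\supp\mu}\cov(g_k u_{\f(x)}\Gamma)\ge c^{\rk(\Gamma)}$. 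The negation then reads: for every $c>0$ there exists $K=K(c)$ such that for every $k>K$ one can select $\Gamma_k\in\mathfrak{P}(\Z,n+1)$ (depending on $c$ and $k$, but \emph{independent} of $x$) with
$$\sup_{x\in B\cap\supp\mu}\cov\bigl(g_k u_{\f(x)}\Gamma_k\bigr)<c^{\rk(\Gamma_k)}.$$

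Assuming Condition \ref{2nd_cond_R} fails, I then fix an arbitrary $x\in B\cap\supp\mu$ and $\epsilon>0$ and apply the negation with $c=\epsilon/2$. For every $k>K(\epsilon/2)$, Lemma \ref{p33} applied to $g_k u_{\f(x)}\in\SL_{n+1}(\R)$ together with the primitive submodule $\Gamma_k$ yields
$$\delta\bigl(g_k u_{\f(x)}\Z^{n+1}\bigr)\le 2\,\cov\bigl(g_k u_{\f(x)}\Gamma_k\bigr)^{1/\rk(\Gamma_k)}<2\cdot\tfrac{\epsilon}{2}=\epsilon.$$
Since $\epsilon>0$ was arbitrary, $\delta(g_k u_{\f(x)}\Z^{n+1})\to 0$ as $k\to\infty$; by Mahler's criterion (Theorem \ref{Mah}) this is precisely divergence of the orbit in $\Omega_{n+1}$, and Lemma \ref{Dani} then identifies this divergence with singularity of $\f(x)$. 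Since $x\in B\cap\supp\mu$ was arbitrary, $\f(B\cap\supp\mu)$ lies in the set of singular vectors, which is the claim.

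There is no genuinely difficult step; the one point to be careful about is the quantifier structure. In Condition \ref{2nd_cond_R} the submodule $\Gamma$ is quantified \emph{after} the time $k_i$, so in the negation the obstructing $\Gamma_k$ is allowed to depend on $k$ — which is to be expected, since the ``direction'' in which the lattice collapses typically varies with $k$. The crucial uniformity in $x$ comes for free, however, because $\sup_x$ is the outermost operation in Condition \ref{2nd_cond_R}, so a single $\Gamma_k$ witnesses smallness of $\cov(g_k u_{\f(x)}\Gamma_k)$ for every $x\in B\cap\supp\mu$ simultaneously, and $\delta\to 0$ consequently holds uniformly over the support.
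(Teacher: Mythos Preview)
Your proof is correct and follows essentially the same route as the paper's: negate the condition, apply Lemma~\ref{p33} to bound $\delta(g_k u_{\f(x)}\Z^{n+1})$ by $2c$, and invoke the Dani correspondence. If anything, your version is slightly more careful---you account for the factor~$2$ in Lemma~\ref{p33} by choosing $c=\epsilon/2$, and you correctly conclude that \emph{every} $x\in B\cap\supp\mu$ gives a singular $\f(x)$ (matching the proposition's statement), whereas the paper's write-up says ``$\mu$-almost every $x\in B$'' even though its argument also gives the pointwise conclusion.
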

\begin{proof}
	If the second condition does not hold then for every $c>0$ and for all large enough $k$, there exists $\Gamma_k\in \mathfrak{P}({\Z,n+1})$ such that $$
		\sup_{B\cap\supp{\mu}}\cov (g_k u_{\f(x)}\Gamma_k)<c^{\rk(\Gamma_k)}.
		$$ Hence by Lemma \ref{p33} for $\mu$-almost every every $x\in B$, $\delta(g_k u_{\f(x)}\Z^{n+1})\leq \cov( g_k u_{\f(x)}\Gamma_k )^{\frac{1}{\rk(\Gamma_k)}}\implies \delta(g_ku_{\f(x)}\Z^{n+1})< c$ for all sufficiently large $k$. Now using Lemma \ref{Dani}, we can conclude that for $\mu$-almost every $x\in B$ we have $\f(x)$ to be singular.
\end{proof}
\begin{remark}
Note that in the previous proposition, the other side of Dani correspondence i.e. Lemma \ref{Dani}, play a crucial role. 
\end{remark}

\subsection{Covolume calculation}
  
Let us denote the set of rank $j$  submodules of ${\Z}^{n+1}$ as $\mathcal{S}_{n+1, j}$. 
\begin{lemma}\label{lm1}
Let $\bw\in\bigwedge^j(\Z^{n+1}),\bw=\sum w_I\be_I$. Then 
\begin{equation}
g_k u_\bx\bw= =e^{-jk}\sum_{\lbrace I,~ 0\notin I\rbrace} w_I\be_I+e^{(n-j+1)k}\sum_{\lbrace I~|~ 0\in I\rbrace}\left(w_I+\left(\sum_{i\notin I}\pm w_{I\setminus\{0\}\cup\{i\}}x_i\right)\right)\be_I.
\end{equation}
\end{lemma}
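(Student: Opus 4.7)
The identity is a direct linear-algebra computation on the standard basis $\{\be_I\}$ of $\bigwedge^j \R^{n+1}$, which I would carry out in three stages. First I compute $u_\bx \be_I$ for a single basis element $\be_I = \be_{i_1}\wedge\cdots\wedge\be_{i_j}$ with $i_1<\cdots<i_j$. Since $u_\bx$ acts on $\R^{n+1}$ by $\be_0 \mapsto \be_0$ and $\be_i \mapsto \be_i + x_i \be_0$ for $i\ge 1$, expanding the wedge multilinearly and using $\be_0\wedge\be_0 = 0$ gives $u_\bx \be_I = \be_I$ when $0\in I$, and
$$u_\bx \be_I \;=\; \be_I \;+\; \sum_{k=1}^{j}(-1)^{k-1}\, x_{i_k}\;\be_0\wedge \be_{i_1}\wedge\cdots\widehat{\be_{i_k}}\cdots\wedge \be_{i_j}$$
when $0\notin I$; the sign $(-1)^{k-1}$ arises from moving $\be_0$ past $k-1$ basis vectors to the front so that the indices appear in increasing order.

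Next I sum over $\bw = \sum w_I \be_I$ and reindex. For each $J$ with $0\notin J$, the displayed sum distributes among basis elements $\be_{I'}$ with $0\in I'$ through the correspondence $I' = (J\setminus\{i_k\})\cup\{0\}$. Inverting this map: for each fixed $I'$ with $0\in I'$ and each $i\notin I'$ (necessarily $i\ge 1$), there is a unique $J = (I'\setminus\{0\})\cup\{i\}$ contributing $\pm w_J\, x_i$ to the coefficient of $\be_{I'}$, where the sign is the $(-1)^{k-1}$ determined by the position of $i$ in the sorted tuple $J$. Collecting terms gives
$$u_\bx \bw \;=\; \sum_{\{I:\,0\notin I\}} w_I\,\be_I \;+\; \sum_{\{I:\,0\in I\}}\Big(w_I + \sum_{i\notin I} \pm\, w_{(I\setminus\{0\})\cup\{i\}}\, x_i\Big)\be_I.$$

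Finally I apply $g_k$, which acts on each basis element as a scalar: since $g_k \be_0 = e^{nk}\be_0$ and $g_k \be_i = e^{-k}\be_i$ for $i\ge 1$, one has $g_k \be_I = e^{-jk}\be_I$ when $0\notin I$ (all $j$ factors scaled by $e^{-k}$), and $g_k \be_I = e^{nk}\cdot e^{-(j-1)k}\be_I = e^{(n-j+1)k}\be_I$ when $0\in I$. Multiplying the two groups of terms in the previous display by these two scalars reproduces the formula in the lemma. The only ``obstacle'' here is careful bookkeeping of the signs $(-1)^{k-1}$ and of the reindexing bijection between a term $(J,k)$ with $0\notin J$ and a pair $(I,i)$ with $0\in I$, $i\notin I$; beyond that, the content is routine exterior algebra.
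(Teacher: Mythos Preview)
Your proof is correct and follows essentially the same approach as the paper's: compute $u_\bx \be_I$ on basis elements (splitting into the cases $0\in I$ and $0\notin I$), then apply the diagonal $g_k$ using its eigenvalues on the $\be_i$, and finally sum and reindex. You are in fact a bit more explicit than the paper about the sign $(-1)^{k-1}$ and about the bijection used in the reindexing step, but the argument is the same.
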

\begin{proof}
Note that  $u_{\bx}$ leaves $\be_0$ invariant and sends $\be_i$ to $x_i\be_0+\be_i$ for $i\geq 1$.
Therefore 
$$
u_{\bx}(\be_I)=\left.
\begin{cases}
\be_I  &\text{ if } 0\in I\\
\be_I +\sum\limits_{i\in I} \pm x_i\be_{I\setminus\{i\}\cup\{0\}} & \text{ if } 0\notin I.
\end{cases}
\right.
$$
 \noindent  
 Also, under the diagonal action of $g_k$, the vectors $\be_i$ are eigenvectors with eigenvalue $e^{-k}$ for $i\geq 1$ and $\be_0$ is an eigenvector with eigenvalue $e^{nk}$. Therefore,
\begin{equation}
g_k u_{\bx}(\be_I)=\begin{cases}
\begin{aligned}
& e^{(n-j+1)k}\be_I  &\text{ if } 0\in I\\
& e^{-jk}\be_I\pm e^{(n-j+1)k}\sum\limits_{i\in I}  x_i\be_{I\setminus\{i\}\cup\{0\}} & \text{ if } 0\notin I.
\end{aligned}
\end{cases}
\end{equation}

Thus, for $\bw\in\bigwedge^j(\Z^{n+1}),\bw=\sum w_I\be_I$ with $w_I\in\Z$ the conclusion follows.
\end{proof}
  Let $V_0$ be the $\Z$ submodule of $\R^{n+1}$ generated by $\be_1,\cdots,\be_n$. Note we can write \begin{equation}\label{cw}
 \bc(\bw)=\begin{bmatrix}
\bc(\bw)_0\\
\bc(\bw)_1\\
\vdots\\
\bc(\bw)_n
\end{bmatrix}, \end{equation}
where $\bc(\bw)_i=\sum\limits_{\substack{J\subset\{1,\cdots,n\} \\\# J=j-1}} w_{J\cup\{i\}}\be_J\in \bigwedge^{j-1}(V_0)$  for $i=0,\cdots, n$. Let $\tilde\bx=(1,\bx)$ and $\pi$ be the orthogonal projection from $\bigwedge ^j(\R^{n+1})\to \bigwedge^j V_0$.

\begin{proposition}\label{p3}
The second Condition \eqref{c22} in Theorem \ref{Thm_sing} is equivalent to the following condition. 
\begin{enumerate}
\item[(2')]\label{2prime} There exists $c>0$ ~
and  $~k_i\to\infty$  such that  $\forall~j=1,\cdots, n$ and  $\forall~\bw\in \bigwedge^{j}\Z^{n+1}$, one has  \begin{equation}\label{c222}
\max\bigg( e^{(n-j+1)k}\Vert R\bc(\bw) \Vert, e^{-jk}\Vert \pi(\bw)\Vert\bigg)\geq c^{j},
\end{equation}
\end{enumerate}
where $R$ is a $(s+1)\times (n+1)$ matrix that depends on the ball $B$, the function $\f$ and the measure $\mu$.
\end{proposition}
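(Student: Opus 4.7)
The plan is to show that both conditions are equivalent to a single inequality of the form $\sup_{x \in B \cap \supp\mu} \|g_k u_{\f(x)} \bw\| \gtrsim c^j$ for all decomposable primitive $\bw \in \bigwedge^j \Z^{n+1}$, by computing this supremum explicitly via Lemma \ref{lm1}. Recall that for a primitive submodule $\Gamma$ of rank $j$ with integral basis $\bw_1, \ldots, \bw_j$, the wedge $\bw = \bw_1 \wedge \cdots \wedge \bw_j$ is a decomposable primitive integer $j$-vector satisfying $\cov(g_k u_{\f(x)} \Gamma) = \|g_k u_{\f(x)} \bw\|$, so condition \eqref{c22} is precisely a lower bound on this supremum, and condition (2') should be a matching lower bound on the max expression on the right-hand side of \eqref{c222}.

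First I would apply Lemma \ref{lm1} to split $g_k u_{\f(x)} \bw$ into a ``bottom'' part supported on $\{\be_I : 0 \notin I\}$ and a ``top'' part supported on $\{\be_I : 0 \in I\}$. The bottom part is exactly $e^{-jk} \pi(\bw)$ and is independent of $x$. The top part at coordinate $I = \{0\} \cup J$ is $e^{(n-j+1)k}$ times a linear functional of $\tilde\f(x) = (1, \f(x))$, whose $(n+1)$-vector of coefficients is (up to signs) the $J$-column of the matrix $\bc(\bw)$ from \eqref{cw} — this is a direct rewriting of $w_I + \sum_{i \notin I} \pm w_{I \setminus \{0\} \cup \{i\}} x_i$ in terms of $\bc(\bw)_0|_J, \ldots, \bc(\bw)_n|_J$. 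Since the top and bottom pieces live in complementary coordinate subspaces of $\bigwedge^j \R^{n+1}$, the supremum norm of their sum is exactly the max of their individual sup norms. Next I would use nonplanarity of $(\f, \mu)$ in $\cL$: the linear span of $\{\tilde\f(x) : x \in B \cap \supp\mu\}$ is the $(s+1)$-dimensional subspace $\tilde\cL := \R \cdot \{(1, y) : y \in \cL\}$ of $\R^{n+1}$. A finite-dimensional linear algebra argument then produces an $(s+1)\times(n+1)$ matrix $R$, depending on $B$, $\f$, $\mu$, that realises the orthogonal projection onto $\tilde\cL$ in some basis, so that for every $v \in \R^{n+1}$,
$$\sup_{x \in B \cap \supp\mu} |\tilde\f(x) \cdot v| \asymp \|Rv\|.$$
Applying this coordinatewise as $v$ ranges over the columns of $\bc(\bw)$, the supremum of the top part equals $e^{(n-j+1)k} \|R\bc(\bw)\|$ up to multiplicative constants. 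Combining with the bottom part yields
$$\sup_{x\in B\cap\supp\mu} \|g_k u_{\f(x)} \bw\| \asymp \max\bigl(e^{(n-j+1)k}\|R\bc(\bw)\|,\ e^{-jk}\|\pi(\bw)\|\bigr),$$
so absorbing the constants into $c$ converts \eqref{c22} into \eqref{c222}.

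The main obstacle is the quantitative comparison $\sup_x |\tilde\f(x)\cdot v| \asymp \|Rv\|$ — nonplanarity only gives that the supremum is strictly positive whenever $v$ has nonzero component along $\tilde\cL$, and one must upgrade this to a uniform bi-Lipschitz estimate with an explicit matrix $R$, which is where the dependence of the constants (and of $R$ itself) on $B$, $\f$ and $\mu$ enters. A secondary subtlety is that condition (2') quantifies over all $\bw \in \bigwedge^j \Z^{n+1}$ while condition \eqref{c22} effectively only uses decomposable primitive $\bw$; the direction (2')$\Rightarrow$\eqref{c22} is immediate by restriction, and for \eqref{c22}$\Rightarrow$(2') one observes that the max expression in \eqref{c222} depends on $\bw$ only through the linear quantities $\pi(\bw)$ and $R\bc(\bw)$, so it suffices to test on decomposable representatives after clearing denominators.
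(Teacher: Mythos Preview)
Your argument is correct and matches the paper's: split $g_ku_{\f(x)}\bw$ via Lemma~\ref{lm1}, identify the top part as $\be_0\wedge(\tilde\f(x)\cdot\bc(\bw))$, and replace $\sup_x\|\tilde\f(x)\cdot\bc(\bw)\|$ by $\|R\bc(\bw)\|$ using equivalence of norms on a finite-dimensional space. The paper constructs $R$ slightly differently---it picks functions $g_1,\ldots,g_s$ so that $1,g_1,\ldots,g_s$ form a basis of the span of $1,f_1,\ldots,f_n$ on $B\cap\supp\mu$, factors $\tilde\f=\tilde\bg R$, and then uses that $u\mapsto\sup_x|\tilde\bg(x)\cdot u|$ is a norm on $\R^{s+1}$---and does not invoke nonplanarity at this stage (that enters only in the following Lemma~\ref{l1}, to make $R$ uniform in $B$); these are cosmetic differences, and your ``orthogonal projection'' description of $R$ yields the same matrix up to invertible row operations.
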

\begin{proof}
By Lemma \ref{lm1}, we can write
\begin{equation}\begin{aligned}
& \ \ \ g_k u_{\bx}\bw \\
& = e^{(n-j+1)k}\left(\be_0\wedge\sum_{i=0}^n x_i\bc(\bw)_i\right)+ e^{-jk}\sum_{\lbrace I\,|\,0\notin I\rbrace}w_I\be_I\\
& =e^{(n-j+1)k}\left(\be_0\wedge\tilde{\bx}\cdot\bc(\bw)\right)+ e^{-jk}\pi(\bw).\end{aligned}
\end{equation}
Hence, we have 
$$\begin{aligned}
\cov( g_k u_{\bx}\Delta) &=  \Vert g_k{u_\bx}\bw\Vert \\&= \max\bigg( e^{(n-j+1)k}\Vert\sum_{i=0}^n x_i \bc(\bw)_i\Vert, e^{-jk} \Vert \pi(\bw)\Vert\bigg).\\
\end{aligned}
$$
Thus for $\bx=\f(x)$,
\begin{equation}
\label{covolume}
	\begin{aligned}
	&\sup_{x\in B\cap\, \supp\mu} \cov( g_k u_{\f(x)}\Delta) \\
	&=\max\left( e^{(n-j+1)k}\sup_{x\in B\cap\, \supp\mu}\Vert \tilde\f(x)\cdot\bc(\bw) \Vert, e^{-jk}\Vert \pi(\bw)\Vert\right),
	\end{aligned}
\end{equation} where $\tilde{\f}=(1,f_1,\cdots,f_n)$ and $B$ is a ball in $\R^d$. Now suppose that the $\R$-span of the restrictions of $1, f_1, \cdots, f_n$ to $B\cap \supp\mu$ has dimension $s+1$ and choose $g_1, \cdots, g_s :B\cap\supp \mu\mapsto \R$ such that $1, g_1, \cdots, g_s$ form a basis of the space.  Therefore there exists a matrix $R=(r_{i,j})_{(s+1)\times(n+1)} $ such that $\tilde{\f}(x)=\tilde{\bg}(x)R ~ \forall~ x\in B\cap \supp \mu$ where $\tilde{\bg}=(1, g_1,\cdots, g_s)$. We can rewrite \begin{equation}\label{e1}
    \sup_{x\in B\cap \supp\mu}\Vert \tilde\f(x)\bc(\bw) \Vert=\sup_{x\in B\cap \supp\mu}\Vert \tilde\bg(x)R\bc(\bw) \Vert.\end{equation} Hence by \eqref{covolume} and \eqref{e1} the second Condition \eqref{c22} in \ref{Thm_sing} and is equivalent to the Condition $(2)'$ \eqref{c222} by equivalence of norms since $1, g_1, \cdots, g_s$ are linearly independent.
\end{proof}

\begin{lemma}\label{l1} For all $(\f,\mu)$  nonplanar in $\mathcal{L}$, the matrix $R$ in Condition \eqref{c22} can be chosen uniformly for all ball $B$ with $\mu(B)>0$.
\end{lemma}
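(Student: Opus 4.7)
The plan is to build the matrix $R$ from an intrinsic basis of the linear span of the lifted affine subspace, so that the construction depends only on $\mathcal{L}$ (and the coordinates of $\R^{n+1}$) rather than on the particular ball $B$.

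Let $\tilde{\mathcal{L}}:=\{(1,\by) : \by\in\mathcal{L}\}\subset\R^{n+1}$ and let $V:=\Span_{\R}(\tilde{\mathcal{L}})$. Since $\tilde{\mathcal{L}}\subset\{x_{0}=1\}$ and $\dim\mathcal{L}=s$, one has $\dim V=s+1$, and $V\not\subset\{x_{0}=0\}$. Hence the linear functional $x_{0}|_{V}$ is nonzero, so $V_{0}:=V\cap\{x_{0}=0\}$ has dimension $s$, and there exists $v_{0}\in\tilde{\mathcal{L}}\cap V$ (any lift of a fixed basepoint of $\mathcal{L}$). Choose once and for all a basis $w_{1},\ldots,w_{s}$ of $V_{0}$; then $v_{0},w_{1},\ldots,w_{s}$ is a basis of $V$. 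Let $R\in\Mat_{(s+1)\times(n+1)}(\R)$ be the matrix whose rows are $v_{0},w_{1},\ldots,w_{s}$. This $R$ is defined independently of $B$.

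Next, I would show $\tilde{\f}(x)=\tilde{\bg}(x)R$ with $\tilde{\bg}=(1,g_{1},\ldots,g_{s})$. For any $\by\in\mathcal{L}$, the vector $(1,\by)-v_{0}$ lies in $V_{0}$, so it expands uniquely as $\sum_{j=1}^{s}h_{j}(\by)w_{j}$, and the functions $h_{j}$ are affine coordinates providing an affine isomorphism $\mathcal{L}\cong\R^{s}$. Setting $g_{j}(x):=h_{j}(\f(x))$ gives $\tilde{\f}(x)=v_{0}+\sum_{j=1}^{s}g_{j}(x)w_{j}=\tilde{\bg}(x)R$ on all of $X$, in particular on every $B\cap\supp\mu$.

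The remaining step, which is the real content, is to verify that $1,g_{1},\ldots,g_{s}$ are linearly independent on $B\cap\supp\mu$ whenever $\mu(B)>0$, so that $R$ genuinely plays the role required in the proof of Proposition \ref{p3}. Suppose instead $c_{0}+\sum_{j=1}^{s}c_{j}g_{j}\equiv 0$ on $B\cap\supp\mu$ with $(c_{0},\ldots,c_{s})\neq 0$. Then $c_{0}+\sum_{j=1}^{s}c_{j}h_{j}(\f(x))=0$ for all $x\in B\cap\supp\mu$, so $\f(B\cap\supp\mu)$ is contained in the affine subspace $\{\by\in\mathcal{L}:c_{0}+\sum_{j=1}^{s}c_{j}h_{j}(\by)=0\}$. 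Since $(c_{1},\ldots,c_{s})\neq 0$ (otherwise $c_{0}=0$ too), this is a proper affine subspace of $\mathcal{L}$, contradicting the nonplanarity of $(\f,\mu)$ in $\mathcal{L}$. The main (minor) obstacle is simply keeping track of the identification between affine coordinates on $\mathcal{L}$ and the basis choice in $V$; once this bookkeeping is done, nonplanarity in $\mathcal{L}$ delivers the conclusion uniformly in $B$.
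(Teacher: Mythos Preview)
Your proof is correct and follows essentially the same approach as the paper: both fix an affine isomorphism $\R^{s}\to\mathcal{L}$ (equivalently, a basis of the lift $\Span_{\R}\tilde{\mathcal{L}}$) to produce a single matrix $R$ depending only on $\mathcal{L}$, set $\bg=\mathbf{h}^{-1}\circ\f$, and then invoke nonplanarity in $\mathcal{L}$ to get linear independence of $1,g_{1},\ldots,g_{s}$ on each $B\cap\supp\mu$. Your version is simply more explicit in constructing $R$ and in spelling out the contradiction with nonplanarity.
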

\begin{proof}
Let $\dim\mathcal{L}=s$ and let
\begin{equation}{\label{R-type}}\mathbf{h} : \R^s\to \mathcal{L}\subset \R^n \text{ be an affine isomorphism, and }\tilde{\mathbf{h}}(\bx)= \tilde{\bx}R, \bx\in \R^s,
\end{equation} 
where $\tilde{\mathbf{h}}:=(1,h_1, \cdots, h_n) $. Then $\bg= \mathbf{h}\inv\circ \f$ spans the space of $F$-span of the restrictions of $1, f_1, \cdots, f_n$ to $B\cap \supp\mu$ and satisfies $\tilde{\f}(x)=\tilde{\mathbf{g}}(x)R ~\forall ~x\in B\cap\supp\mu$. Moreover, since $(\f,\mu)$ is nonplanar in $\mathcal{L}$, we have $1,g_1,\cdots,g_s$ are linearly independent. Since we chose $R$ such as it only depends on the affine subspace $\mathcal{L}$, Condition $(2)'$ \eqref{c222} only depends on the subspace $\mathcal{L}$ as long as $(\f,\mu)$ is nonplanar in $\mathcal{L}$.
\end{proof}
\subsection{Completing the proof of Theorems \ref{sing_main_1}}
\subsubsection{\eqref{e3}  $\iff$ \eqref{e4} in Theorem \ref{sing_main_1}}
By Proposition \ref{p2} if \eqref{e3} is true, then Condition \eqref{c22}${\iff}$ Condition $(2)'$ (see $\eqref{c222}$) due to Proposition \ref{p3}, should be satisfied. By the hypothesis of Theorem \ref{sing_main_1}, $(\f,\mu)$ is good and therefore by Theorem \ref{Thm_sing}, \eqref{e4} is true. Since $(\mathbf{Id},\lambda_{\mathcal{L}})$ is nonplanar in $\mathcal{L}$, same argument as above shows \eqref{e4} implies \eqref{e3} due to Lemma \ref{l1} .
\subsubsection{\eqref{e11} $\implies$ \eqref{e2} in Theorem \ref{sing_main_1}}
Since $\f(x)\in \cL$ for all $x\in B\cap\supp(\mu)$, \eqref{e11} $\implies$ \eqref{e2}.
\subsubsection{\eqref{e2} $\implies$ \eqref{e4} in Theorem \ref{sing_main_1}}
Let \eqref{e2} be true, that is there is one $\by\in\cL$ such that $\by$ is not singular.  Then by Proposition \ref{p2}, the second condition in Theorem \ref{Thm_sing} is true. Since $(\f, \mu)$ is good, we have that $\mu$-almost every $x\in X$ has a neighbourhood $V$ such that $(\f, \mu)$ is good. Since $\mu$ is Federer, and we can choose $V$ such that $\mu$ is $D$-Federer on $V$ for some $D>0$. Let us choose a ball $B= B(x, r) $ of positive measure such that the dilated ball $\tilde B= B(x, 3^{n+1}r)$ is contained in $V$.  We have already noted  in (\ref{covolume}) that $\cov(g_ku_{\f(x)}\Gamma) $ is the maximum of the norms of linear combinations of $1, f_1, \cdots, f_n$. Hence the first condition of Theorem \ref{Thm_sing} is satisfied. Thus we can conclude (\ref{e4}).
\subsubsection{\eqref{e4} $\implies$ \eqref{e11} in Theorems \ref{sing_main_1}} It is straightforward that \eqref{e4} $\implies$ \eqref{e11}.
\section{Parametrizing \texorpdfstring{$\cL$}{L} and its connection with singularity}
\label{parra}

One can define uniform exponent of a matrix $A$, following \cite{DFSU}.
\begin{definition}\label{omegahat}
Let $A$ be a $n\times m$ matrix. We define $\hat\omega(A)$ to be sup of all $\omega>0$ such that for all sufficiently large $Q>0$ there exist $\mathbf{0}\neq\bq\in \Z^m$, $\bp\in\Z^n$ satisfying the following system of inequalities, \begin{equation}\label{singAnomega}
 \begin{aligned}&\Vert A\bq +\bp\Vert<\frac{1}{Q^\omega},\\&\Vert \bq\Vert\leq Q.\end{aligned}
 \end{equation}
\end{definition}
We have the following lemma which connects $\hat\omega$ to $\omega$-singularity.
\begin{lemma}\label{omegahatsingular}
Let $A$ be a $k\times l$ matrix, and $n\in\N$. If $\hat\omega(A)=n+\delta$ for some $\delta>0$, then $A$ is $n$-singular. If $A$ is $n$-singular then $\hat\omega(A)\geq n.$ 
\end{lemma}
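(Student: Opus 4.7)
Both implications are essentially unwindings of the definitions; the only substantive content is matching the roles of the quantifier on $c$ (appearing in $n$-singularity) with the exponent gap $\delta$ (appearing in $\hat\omega$). The plan is to verify each direction separately.

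\textbf{First direction ($\hat\omega(A)=n+\delta \Rightarrow A$ is $n$-singular).} I would fix any $c>0$ and show the defining inequalities of $n$-singularity hold for all sufficiently large $Q$. By definition of $\hat\omega(A)$ as a supremum, any exponent $\omega<\hat\omega(A)$ works for all large $Q$; in particular, taking $\omega=n+\delta/2$, for every sufficiently large $Q$ there exist $\mathbf{0}\neq\bq\in\Z^l,\ \bp\in\Z^k$ with
\[
\Vert A\bq+\bp\Vert<\frac{1}{Q^{n+\delta/2}}=\frac{1}{Q^{\delta/2}}\cdot\frac{1}{Q^n},\qquad \Vert\bq\Vert\le Q.
\]
Since $Q^{-\delta/2}\to 0$ as $Q\to\infty$, for all $Q$ large enough we have $Q^{-\delta/2}<c$, giving $\Vert A\bq+\bp\Vert<c/Q^n$, which is precisely the $n$-singular condition.

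\textbf{Second direction ($A$ is $n$-singular $\Rightarrow \hat\omega(A)\ge n$).} I would specialize the $n$-singular condition to the single value $c=1$. Then for all sufficiently large $Q>0$ there exist $\mathbf{0}\neq\bq\in\Z^l,\ \bp\in\Z^k$ with $\Vert A\bq+\bp\Vert<1/Q^n$ and $\Vert\bq\Vert\le Q$. This is exactly the defining property witnessing that $\omega=n$ lies in the set whose supremum is $\hat\omega(A)$, so $\hat\omega(A)\ge n$.

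\textbf{Expected obstacles.} There is no substantive obstacle: the argument is bookkeeping with quantifiers. The only small point to watch is that $\hat\omega(A)$ is a supremum, so one cannot a priori use the value $\omega=\hat\omega(A)=n+\delta$ itself in the defining inequality; this is why I slide down to $\omega=n+\delta/2$ in the first direction, and why in the second direction I only obtain $\hat\omega(A)\ge n$ rather than a strict inequality (indeed, no strict inequality can be expected from $n$-singularity alone, since the constant $c$ in $n$-singularity cannot be leveraged to gain an exponent improvement).
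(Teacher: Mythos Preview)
Your proof is correct and follows essentially the same approach as the paper: both directions are unwound from the definitions, with the second direction specializing to $c=1$. Your version is in fact slightly more careful than the paper's in the first direction, since you slide down to $\omega=n+\delta/2$ to avoid using the supremum value itself (the paper writes the inequality with exponent $n+\delta$ directly, which is not a priori guaranteed by the definition of $\hat\omega$ as a supremum).
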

\begin{proof}
 Suppose $\hat\omega(A)=n+\delta$. Then for sufficiently large $Q>0$ there exist $\mathbf{0}\neq\bq\in \Z^l$, $\bp\in\Z^k$ satisfying the following system of inequalities, \begin{equation}\label{singAnomega}
 \begin{aligned}&\Vert A\bq +\bp\Vert<\frac{1}{Q^{n+\delta}},\\&\Vert \bq\Vert\leq Q.\end{aligned}
 \end{equation}
 For any $c>0$, and for sufficiently large $Q$, $\frac{1}{Q^\delta}\leq c$, hence by definition $A$ is $n$-singular.
 Now suppose $A$ is $n$-singular. Then by Definition \ref{omegahatsingular}, we know that for $c=1$ Equation \eqref{singAn} will have non zero integer solutions for all large $Q>0$. Hence $\hat\omega(A)\geq n$.
\end{proof}

\begin{lemma}\label{trivial}
If $A$ is a ${n\times 1} $ matrix such that $A$ is $n$-singular, then $A$ is in $\Q^n$.
\end{lemma}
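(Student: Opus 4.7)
The plan is to argue the contrapositive: assume $A$ is $n$-singular and exhibit a nonzero integer $s$ together with $\mathbf{r} \in \Z^n$ such that $sA = -\mathbf{r}$, which forces $A \in \Q^n$. The strategy is the classical ``difference of two solutions'' trick, here exploiting the uniformity in $c$ built into the definition of singularity.

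Concretely, I fix a small constant $c \in (0, 1/3)$, say $c = 1/4$, and use $n$-singularity to produce, for every $Q$ beyond some threshold $Q_0$, a pair $(q, \bp) \in (\Z \setminus \{0\}) \times \Z^n$ with $|q| \leq Q$ and $\|qA + \bp\|_\infty < c/Q^n$. The key step is then: whenever $(q_1, \bp_1)$ and $(q_2, \bp_2)$ are two such solutions at scales $Q_1 \geq Q_0$ and $Q_2 := 2 Q_1$, one has $\bp_1/q_1 = \bp_2/q_2$ as rational vectors. This is because
\[ q_1 \bp_2 - q_2 \bp_1 \;=\; q_1(q_2 A + \bp_2) - q_2(q_1 A + \bp_1) \]
is a vector in $\Z^n$ whose sup-norm is bounded by $|q_1|\cdot c/Q_2^n + |q_2|\cdot c/Q_1^n \leq c/(2^n Q_1^{n-1}) + 2c/Q_1^{n-1} \leq 3c/Q_1^{n-1}$. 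For $n = 1$ this equals $3c < 1$ by the choice of $c$, while for $n \geq 2$ it tends to $0$ as $Q_1 \to \infty$; in either case the integer vector must vanish identically, which is equivalent to the equality of rationals.

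Iterating this observation along a dyadic sequence $Q_k := 2^k Q_0$, the common rational ratio pinpoints a single $\mathbf{r}/s \in \Q^n$ in lowest terms such that each solution has the form $(q_k, \bp_k) = (\lambda_k s, \lambda_k \mathbf{r})$ for some nonzero integer $\lambda_k$. Substituting back gives $|\lambda_k|\, \|sA + \mathbf{r}\|_\infty < c/Q_k^n$, and since $|\lambda_k| \geq 1$ while the right-hand side tends to $0$, we conclude $sA + \mathbf{r} = 0$, i.e.\ $A = -\mathbf{r}/s \in \Q^n$. The only subtle point is ensuring the integer bound $3c/Q_1^{n-1} < 1$: this is automatic for $n \geq 2$ once $Q_1$ is large, but for $n = 1$ it requires committing to $c < 1/3$ from the outset, which is why I fix $c$ small at the start rather than letting it vary with $Q$.
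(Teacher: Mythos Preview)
Your proof is correct, but it takes a different route from the paper's argument. The paper simply observes that if $A=(\alpha_1,\dots,\alpha_n)^T$ is $n$-singular, then each coordinate $\alpha_i$ satisfies $|q\alpha_i+p_i|<c/Q^n\le c/Q$ with $0\neq q\in\Z$, $|q|\le Q$, for every $c>0$ and all large $Q$; hence each $\alpha_i$ is a singular real number in the classical sense, and the paper then invokes the well-known fact that the only singular elements of $\R$ are rationals.

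Your argument, by contrast, is self-contained: you run the standard ``difference of two approximations'' trick directly on the vector $A$, showing that solutions at consecutive dyadic scales are proportional and hence determine a single rational target $-\mathbf{r}/s$, which must then equal $A$. This is essentially the textbook proof that singular reals are rational, carried out in $\R^n$ rather than coordinate by coordinate. What you gain is that nothing is quoted as a black box; what the paper gains is brevity, since the componentwise reduction collapses the lemma to a one-line citation. Both arguments are valid, and yours would in fact also establish the cited classical fact as the special case $n=1$.
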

\begin{proof}
 Let $A=\begin{bmatrix}
 \alpha_1\\
 \vdots\\
 \alpha_n
 \end{bmatrix}.$ If $A$ is $n$-singular then for every $c>0$ and all sufficiently large $Q>0$ there exist nonzero $q\in\Z$ and $\bp=(p_1,\cdots,p_n)\in\Z^n$ such that 
 $$ \max_{i=1}^n\vert q\alpha_i+p_i\vert \leq \frac{c}{Q^n}.$$
 The above implies each $\alpha_i$ is singular in $\R$. It is well known that singular numbers in $\R$ are just rationals. Hence each $\alpha_i\in\Q$ and we conclude this lemma. 
\end{proof}
\subsection{Parametrization}\label{para} 
If $\cL$ is an $s$-dimensional affine subspace of $\R^n$, one can choose a parametrization $\bx\to (\bx,\bx A_0+a_0)$,
where $A_0$ is a $s\times (n-s)$ matrix and $a_0\in \R^{n-s}$. Here both $\bx$ and $a_0$ are row vectors. We rewrite the parametrization above as $\bx\to (\bx,\tilde\bx A)$, where
$A=\begin{bmatrix}
a_0\\
A_0
\end{bmatrix}$ and $\tilde{\bx}=(1,\bx)$. We refer $\cL_{A}$ to be the affine subspace associated with the $s+1\times n-s$ matrix $A$.

\begin{lemma}\label{everything}
If $A$ is $n$-singular, then all $\by\in\cL$ are singular.
\end{lemma}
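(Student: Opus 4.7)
The plan is to unpack what $n$-singularity of $A$ means algebraically and rearrange it into the defining inequality for singularity of an arbitrary point $\by\in\cL$. Fix $\by=(\bx,a_0+\bx A_0)\in\cL$, write $A=\begin{bmatrix}a_0\\ A_0\end{bmatrix}$ as in \S\ref{para}, and fix the target constant $c^*>0$ for which we want to verify Definition \ref{DEFSING}.

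First I would set $K:=\max\bigl(1,\,\|A_0\|_{\infty\to\infty}\bigr)+1$ and $M:=1+s\|\bx\|_\infty$, and choose $c=c^*/(MK^n)$. Because $A$ is $n$-singular, for every sufficiently large $Q'$ there exist $\bq\in\Z^{n-s}\setminus\{0\}$ and $\bp=(p_0,\bp')\in\Z\times\Z^s$ satisfying
\begin{equation*}
\|A\bq+\bp\|_\infty<\frac{c}{Q'^{\,n}},\qquad \|\bq\|_\infty\le Q'.
\end{equation*}
Splitting this into its first coordinate and its remaining $s$ coordinates gives
\begin{equation*}
|a_0\bq+p_0|<\frac{c}{Q'^{\,n}},\qquad \|A_0\bq+\bp'\|_\infty<\frac{c}{Q'^{\,n}}.
\end{equation*}

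Next I would take the guess $\bq'=(\bp',\bq)\in\Z^n$ and $q_0'=p_0\in\Z$, and compute $\bq'\cdot\by+q_0'$ directly. Writing $\bw:=-(A_0\bq+\bp')\in\R^s$ and $\epsilon_0:=a_0\bq+p_0\in\R$, a short calculation gives
\begin{equation*}
\bq'\cdot\by+q_0'\;=\;\bp'\cdot\bx+\bq\cdot(a_0+\bx A_0)+p_0\;=\;\epsilon_0-\bx\cdot\bw,
\end{equation*}
so $|\bq'\cdot\by+q_0'|\le|\epsilon_0|+\|\bx\|_\infty\|\bw\|_1\le M\,c/Q'^{\,n}$. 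On the other hand $\|\bp'\|_\infty\le\|A_0\bq\|_\infty+c/Q'^{\,n}\le K Q'$ once $Q'$ is large, so $\|\bq'\|_\infty\le KQ'$. Since $\bq\ne 0$ we have $\bq'\ne 0$.

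Finally, substituting $Q=KQ'$ converts this to $\|\bq'\|_\infty\le Q$ and $|\bq'\cdot\by+q_0'|<MK^nc/Q^n=c^*/Q^n$. Because $Q\to\infty$ as $Q'\to\infty$, we obtain the required nonzero integer solution for every sufficiently large $Q$, which is exactly Definition \ref{DEFSING} for $\by$. The argument is algebraic and has no real obstacle beyond bookkeeping the constants; the only content is spotting the correct combination $\bq'=(\bp',\bq)$ that turns the smallness of $A\bq+\bp$ into the smallness of $\bq'\cdot\by+q_0'$ along the affine subspace $\cL$.
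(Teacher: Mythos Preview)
Your argument is correct and is essentially the same as the paper's: both use the integer vector $(\bp',\bq)\in\Z^n$ together with $p_0\in\Z$, observe that $p_0+(\bx,\tilde\bx A)\cdot(\bp',\bq)=\tilde\bx\cdot(A\bq+\bp)$, and bound $\|(\bp',\bq)\|$ by a constant (depending only on $A$) times $\|\bq\|$. The paper compresses the constant bookkeeping into a single line, whereas you spell out $K$, $M$ and the substitution $Q=KQ'$ explicitly, but the content is identical.
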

\begin{proof}
Since $A$ is $n$-singular, for all sufficiently large $Q>0$ there exist $\mathbf{0}\neq\bq\in \Z^{s+1}$, $\bp\in\Z^{n-s}$ satifying \eqref{singAn} for $\omega=n$.  Now for any $\bx\in\R^s$, one can write $$
\vert p_0+(\bx,\tilde{\bx}A)
\begin{bmatrix}
\bp'\\
\bq\\
\end{bmatrix}\vert \leq \Vert \tilde\bx\Vert \Vert A\bq+\bp\Vert,$$ where $\bp=(p_0,\cdots,p_s)\in\Z^{s+1}$ and $\bp'=(p_1,\cdots,p_s)\in\Z^s$. Since $\Vert \bp\Vert \leq C\Vert \bq\Vert$, where $C$ only depends on $A$, we conclude that any $(\bx,\tilde{\bx}A)\in\cL$ is singular.
\end{proof}


Let us denote $R_A:=\begin{bmatrix}
I_{s+1} & | & A
\end{bmatrix}$ where $A$ is a $s+1\times n-s$ matrix, and recall the definition of $\bc(\bw)$ in \eqref{cw}.
Let us recall Lemma 5.1 of \cite{Kleinbock-exponent}.
\begin{lemma}
Suppose that $\Vert R_A\bc(\bw)\Vert$ is less than $1$ for some $\bw\in \bigwedge^j\Z^{n+1}$. Then 
$$\Vert 
\bw\Vert\ll 1 + \Vert\pi(\bw)\Vert.$$
\end{lemma}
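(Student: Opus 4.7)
The plan is to exploit the elementary decomposition
\[
\bw = \be_0 \wedge \bc(\bw)_0 + \pi(\bw),
\]
which is immediate from \eqref{cw} by separating the sum defining $\bw$ according to whether $0 \in I$ or $0 \notin I$: the summands with $0 \in I$ reassemble (using $\be_{\{0\} \cup J} = \be_0 \wedge \be_J$ in the natural ordering) into $\be_0 \wedge \bc(\bw)_0$, while the summands with $0 \notin I$ are by definition $\pi(\bw)$. Since the exterior-algebra norm used is the max of the coefficients in the standard basis, $\Vert \be_0 \wedge \bc(\bw)_0 \Vert = \Vert \bc(\bw)_0 \Vert$, so the triangle inequality yields $\Vert \bw \Vert \leq \Vert \bc(\bw)_0 \Vert + \Vert \pi(\bw) \Vert$ and the lemma reduces to bounding $\Vert \bc(\bw)_0 \Vert$ by $1 + \Vert \pi(\bw) \Vert$ up to a multiplicative constant.

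The second step is to read this bound off the zeroth row of $R_A \bc(\bw)$. Because $R_A = [\, I_{s+1} \mid A\, ]$, that row equals
\[
\bc(\bw)_0 + \sum_{k=1}^{n-s} a_{0k}\, \bc(\bw)_{s+k},
\]
and the hypothesis $\Vert R_A \bc(\bw) \Vert < 1$, together with the triangle inequality, immediately forces
\[
\Vert \bc(\bw)_0 \Vert < 1 + \sum_{k=1}^{n-s} |a_{0k}|\, \Vert \bc(\bw)_{s+k} \Vert.
\]

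The last step is the observation that for every $i \geq 1$ the formula $\bc(\bw)_i = \sum_{|J|=j-1,\, J \subset \{1,\ldots,n\}} w_{J \cup \{i\}}\, \be_J$ involves only coefficients $w_I$ with $0 \notin I$, so $\Vert \bc(\bw)_i \Vert \leq \Vert \pi(\bw) \Vert$. Plugging this in for $i = s+1, \ldots, n$ gives $\Vert \bc(\bw)_0 \Vert \leq 1 + C_A \Vert \pi(\bw) \Vert$, with $C_A$ depending only on the entries of $A$, and combining with the initial decomposition produces $\Vert \bw \Vert \ll 1 + \Vert \pi(\bw) \Vert$. I do not anticipate any serious obstacle here: the lemma is essentially a combinatorial bookkeeping between ``$0$-involving'' and ``$0$-avoiding'' multi-indices, with the identity block of $R_A$ providing the crucial link between $\bc(\bw)_0$ and the coordinates controlled by $\pi(\bw)$.
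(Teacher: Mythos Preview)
Your argument is correct. The decomposition $\bw = \be_0 \wedge \bc(\bw)_0 + \pi(\bw)$ is exactly right, the zeroth row of $R_A\bc(\bw)$ gives precisely the relation you use to bound $\Vert\bc(\bw)_0\Vert$, and the observation that $\bc(\bw)_i$ for $i\geq 1$ involves only coefficients $w_I$ with $0\notin I$ (hence is dominated by $\Vert\pi(\bw)\Vert$) closes the argument. One minor sharpening: since the max norm is being used and the two summands $\be_0\wedge\bc(\bw)_0$ and $\pi(\bw)$ live on disjoint sets of basis vectors, you actually have the equality $\Vert\bw\Vert=\max\big(\Vert\bc(\bw)_0\Vert,\Vert\pi(\bw)\Vert\big)$ rather than just the triangle inequality, though this does not affect the conclusion.

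As for comparison with the paper: the paper does not supply its own proof of this lemma; it simply recalls it as Lemma~5.1 of \cite{Kleinbock-exponent}. Your write-up therefore fills in what the paper leaves as a citation, and the approach you take is essentially the natural one (and is, in substance, the same as Kleinbock's original argument).
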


By the above lemma, Condition $(2)'$ \eqref{2prime} reduces to the following condition:
There exists $c>0$ ~
and  $~k_i\to\infty$  such that  $\forall~j=1,\cdots, n-s$ and  $\forall~\bw\in \bigwedge^{j}\Z^{n+1}$, one has  \begin{equation}\label{c2222}
\max\bigg( e^{(n-j+1)k_i}\Vert R_A\bc(\bw) \Vert, e^{-jk_i}\Vert \pi(\bw)\Vert\bigg)\geq c^{j},
\end{equation}
This is same as saying that:  

\textbf{Condition $(2)^\star$}\label{ccc}: There exists $c>0$ 
and  $Q_i\to\infty$  such that  $\forall~j=1,\cdots, n-s$ such that there is no nonzero $\bw\in \bigwedge^{j}\Z^{n+1}$, satisfying the following system:

\begin{equation}
\label{condition_lastest1}
\left\{\begin{aligned}
  &\Vert R_A\bc(\bw)\Vert<\frac{c^j}{Q^{(n-j+1)}_i}\\
  & \Vert \pi_{\bullet}(\bw)\Vert<c^j Q_i^j
\end{aligned}\right.
\end{equation}

Note that the above condition is a property of a subspace. The following theorem follows combining Theorem \ref{Thm_sing}, Propositions \ref{p2}, Proposition \ref{p3} and the above discussion.
\begin{theorem}\label{subspaceA}
Let $A$ and $\cL_{A}$ be as in this section. $\lambda_{\cL_{A}}$ almost every $\by\in\cL_{A}$ is not singular iff \textbf{Condition $(2)^\star$} (\ref{condition_lastest1}) is satisfied by $\cL_{A}$. 
\end{theorem}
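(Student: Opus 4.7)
My plan is to prove Theorem \ref{subspaceA} by recognizing it as a direct specialization of the general machinery already developed. Take $X = \R^s$ equipped with Lebesgue measure $\mu$, and let $\f : \R^s \to \cL_A$ be the canonical affine parametrization $\f(\bx) = (\bx, \tilde{\bx}A)$. Then $\f_\star\mu$ is (a scalar multiple of) the Haar measure $\lambda_{\cL_A}$, so the desired a.e.\ statement on $\cL_A$ is equivalent to one on $\R^s$ via $\f$. Lebesgue measure is Federer and $\R^s$ is Besicovitch. The coordinates of $\f$ are degree-$1$ polynomials in $\bx$, hence $(C,\alpha)$-good by \cite[Lemma 4.1]{DM}, and $(\f,\mu)$ is nonplanar in $\cL_A$ by construction since $\f(\R^s) = \cL_A$. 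Moreover, for this choice of $\f$, the direct computation $\tilde{\f}(\bx) = (1,\bx,\tilde{\bx}A) = \tilde{\bx} R_A$ (with $R_A = [I_{s+1}\mid A]$) shows that the matrix $R$ provided by Lemma \ref{l1} can be taken equal to $R_A$, uniformly in the ball $B$.

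For the forward direction, I would assume Condition $(2)^\star$ holds and chain the equivalences backwards: Condition $(2)^\star$ is by construction equivalent to \eqref{c2222}, which by Lemma 5.1 of \cite{Kleinbock-exponent} is the reduction of Condition $(2)'$ \eqref{c222} in Proposition \ref{p3} with $R = R_A$, and $(2)'$ is in turn equivalent to hypothesis \eqref{c22} of Theorem \ref{Thm_sing}. Together with the goodness condition verified above, Theorem \ref{Thm_sing} applied on any ball $B \subset \R^s$ yields that Lebesgue-a.e.\ $\bx \in B$ satisfies that $\f(\bx)$ is not singular. Covering $\R^s$ by countably many balls and pushing forward by $\f$ gives that $\lambda_{\cL_A}$-a.e.\ $\by \in \cL_A$ is not singular.

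For the backward direction I would argue the contrapositive. Suppose Condition $(2)^\star$ fails. Via the same chain of equivalences, hypothesis \eqref{c22} of Theorem \ref{Thm_sing} fails; since the matrix $R_A$ is independent of $B$ (Lemma \ref{l1}), the failure is uniform over all balls $B$ of positive measure. Proposition \ref{p2} then forces $\f(B \cap \supp \mu) = \f(B)$ to lie entirely in the set of singular vectors for every such $B$. Covering $\R^s$ by balls, every point of $\cL_A$ is singular, contradicting (indeed, strongly contradicting) the hypothesis that $\lambda_{\cL_A}$-a.e.\ $\by \in \cL_A$ is non-singular.

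The main content is already absorbed into the cited results: Kleinbock's Lemma 5.1 (and the ensuing reduction from $(2)'$ with arbitrary $R$ to the specific $R_A$-form \eqref{c2222}) is what trims the index range from $j = 1, \dots, n$ down to $j = 1, \dots, n-s$ and ensures that $\|\bw\|$ is controlled by $\|\pi(\bw)\|$ whenever $\|R_A\bc(\bw)\|$ is small. Once this reduction is available and the canonical parametrization is recognized as good and nonplanar in $\cL_A$, the theorem reduces to a two-line chase through Theorem \ref{Thm_sing} and Proposition \ref{p2}; no genuine obstacle remains.
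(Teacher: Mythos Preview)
Your proposal is correct and follows essentially the same route as the paper, which simply states that the theorem follows by combining Theorem~\ref{Thm_sing}, Proposition~\ref{p2}, Proposition~\ref{p3}, and the reduction via Lemma~5.1 of \cite{Kleinbock-exponent}. You have unpacked the argument in more detail---making explicit the choice of the affine parametrization $\f(\bx)=(\bx,\tilde\bx A)$, the identification $R=R_A$, and the verification of goodness and nonplanarity---but the logical skeleton is identical to the paper's.
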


\begin{proposition}\label{1isimportant}
 \textbf{Condition $(2)^\star$} (\ref{condition_lastest1}) for $j=1$ holds if and only if $A$ is not $n$-singular.
\end{proposition}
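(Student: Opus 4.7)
The plan is to unpack Condition~$(2)^\star$ at $j=1$ in coordinates and recognize it, up to a linear rescaling of $Q$ and $c$, as exactly Definition~\ref{nsingular} of $n$-singularity for $A$. For $j=1$ the space $\bigwedge^1\Z^{n+1}$ is just $\Z^{n+1}$ and the vector $\bc(\bw)$ of \eqref{cw} reduces to $\bw$ itself. Writing $\bw=\begin{bmatrix}\bp\\ \bq\end{bmatrix}$ with $\bp=(p_0,p_1,\ldots,p_s)\in\Z^{s+1}$ and $\bq\in\Z^{n-s}$, the identity $R_A=[\,I_{s+1}\,|\,A\,]$ yields $R_A\bw=\bp+A\bq$, while $\pi(\bw)=(p_1,\ldots,p_s,q_1,\ldots,q_{n-s})$ simply drops the zeroth coordinate $p_0$. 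Thus the $j=1$ instance of \eqref{condition_lastest1} becomes
\[
\|A\bq+\bp\|<\frac{c}{Q^n},\qquad \max\bigl(\max_{i\geq 1}|p_i|,\,\max_j|q_j|\bigr)<cQ,
\]
to be compared with the $n$-singular system $\|A\bq+\bp\|<c/Q^n$, $\|\bq\|\leq Q$.

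For the direction ``$A$ not $n$-singular $\Rightarrow$ Condition~$(2)^\star$ for $j=1$ holds'', I would take $c>0$ and $Q_i\to\infty$ witnessing the failure of $n$-singularity (WLOG $c<1$), set $c'=(c/2^{n+1})^{1/(n+1)}$ and $Q'_i=\lfloor Q_i/c'\rfloor\to\infty$, and argue by contradiction. A putative nonzero $\bw=(\bp,\bq)$ solving the $j=1$ system at $Q'_i$ cannot have $\bq=0$: then $\bp\neq 0$ and $\|R_A\bw\|=\|\bp\|\geq 1$ contradicts $\|R_A\bw\|<c'/(Q'_i)^n<1$ for $i$ large. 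If $\bq\neq 0$, then $\|\bq\|<c'Q'_i\leq Q_i$, and the estimate $c'/(Q'_i)^n\leq 2^n(c')^{n+1}/Q_i^n=c/(2Q_i^n)<c/Q_i^n$ (using $Q'_i\geq Q_i/(2c')$ for $i$ large together with $(c')^{n+1}=c/2^{n+1}$) produces a nonzero solution of the $n$-singular system at $Q_i$, contradicting the hypothesis.

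For the reverse direction ``$A$ is $n$-singular $\Rightarrow$ Condition~$(2)^\star$ for $j=1$ fails'', I would fix $c,Q_i\to\infty$ witnessing the condition and rescale the other way: $Q'_i=\lfloor cQ_i/(2(\|A\|+1))\rfloor\to\infty$ and $c'=c^{n+1}/(4(\|A\|+1))^n$. Applying $n$-singularity at $Q'_i$ with parameter $c'$ furnishes $\bq\neq 0$ and $\bp$ with $\|A\bq+\bp\|<c'/(Q'_i)^n$ and $\|\bq\|\leq Q'_i$. Lifting to $\bw=(\bp,\bq)\neq 0$, the choice of $c'$ gives $\|R_A\bw\|=\|A\bq+\bp\|<c/Q_i^n$, while the bound $|p_i|\leq\|A\|\|\bq\|+c'/(Q'_i)^n\leq(\|A\|+1)Q'_i$ (for $i$ large) implies $\|\pi(\bw)\|\leq(\|A\|+1)Q'_i\leq cQ_i/2<cQ_i$, contradicting Condition~$(2)^\star$ at $Q_i$.

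I do not foresee any conceptual obstacle; both directions reduce to matching two rescaling parameters, with the exponent $n+1$ in $c'=(c/2^{n+1})^{1/(n+1)}$ forced by the disparity between the linear rescaling of $Q$ and the $n$th-power decay $c/Q^n$. The one mildly delicate point, relevant only in the first direction, is that the $(2)^\star_{j=1}$ system only requires $\bw\neq 0$, whereas an $n$-singular solution demands the stronger $\bq\neq 0$; the degenerate case $\bq=0$, $\bp\neq 0$ is ruled out automatically because then $\|R_A\bw\|=\|\bp\|\geq 1$ already contradicts the upper bound on $\|R_A\bw\|$.
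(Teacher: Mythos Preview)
Your argument is correct and follows the same coordinate unpacking as the paper: for $j=1$ one has $\bc(\bw)=\bw$ and $R_A\bw=\bp+A\bq$. The only divergence is in the projection appearing in Condition~$(2)^\star$. The paper's proof takes $\pi_\bullet(\bw)=\bq$ (the last $n-s$ coordinates), so that the $j=1$ system reads verbatim as $\|A\bq+\bp\|<c/Q_i^n$, $\|\bq\|<cQ_i$, and the equivalence with ``$A$ not $n$-singular'' is immediate after the harmless rescaling $Q_i\mapsto cQ_i$; the paper then only states one direction, the other being equally immediate. You instead read the projection as the $\pi$ from Condition~$(2)'$ (dropping only the $0$-th coordinate), which forces you to control $\max_{i\geq 1}|p_i|$ as well; your estimate $|p_i|\leq\|A\|\|\bq\|+\|A\bq+\bp\|$ together with the two-sided rescalings handles this, at the cost of some bookkeeping. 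Your observation that the degenerate case $\bq=0$ is excluded by $\|R_A\bw\|=\|\bp\|\geq 1$ is exactly what is needed under either reading. In short: same idea, but your version does a bit more work because of the broader projection, and in exchange you argue both directions explicitly rather than leaving one implicit.
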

\begin{proof}
 Let $\bw=(\bq_0,\bq)\in \Z^{n+1}\setminus\{0\}$, $\bq_0\in \Z^{s+1}, \bq\in\Z^{n-s}$. Note that $\bc(\bw)=\bw$, $R_A\bc(\bw)=\bq_0+A\bq$ and $\pi_{\bullet}(\bw)=\bq$. Thus if $A$ is not $n$-singular then \textbf{Condition $(2)^\star$}  for $j=1$ is true.\end{proof}

\begin{lemma}\label{rowinterchange}
\textbf{Condition $(2)^\star$} (\ref{condition_lastest1})   does not change with row permutation of $A$.
\end{lemma}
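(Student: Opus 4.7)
The plan is to translate \textbf{Condition $(2)^\star$} into the geometric statement supplied by Theorem~\ref{subspaceA} — that $\lambda_{\cL_A}$-almost every $\by\in\cL_A$ is not singular — and then to exhibit, for any row permutation $A\mapsto A'=P_\sigma A$, a Lebesgue-class-preserving, singularity-preserving bijection $\cL_A\leftrightarrow\cL_{A'}$. Since every permutation of $\{0,1,\ldots,s\}$ factors into transpositions, it suffices to handle a transposition $\sigma$, and I would split into two cases depending on whether $\sigma$ fixes the distinguished index~$0$.

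If $\sigma=(i,j)$ with $i,j\in\{1,\ldots,s\}$, the coordinate swap of the $i$-th and $j$-th entries of $\R^n$ — an element of $\GL_n(\Z)$ — carries $\cL_A$ bijectively onto $\cL_{A'}$: the identity $\tilde\bx A=\tilde{\bx'}A'$, with $\bx'$ obtained from $\bx$ by interchanging its $i$-th and $j$-th entries, follows from $A'_{l,\cdot}=A_{\sigma(l),\cdot}$ by direct computation. Because $\GL_n(\Z)$ acts on $\R^n$ preserving both the set of singular vectors and the Lebesgue measure class, a.e.\ non-singularity transfers across. If instead $\sigma=(0,k)$ for some $k\in\{1,\ldots,s\}$, I lift to $\mathbb P^n$: the substitution $\tilde y_l=\tilde x_{\sigma(l)}$ shows that the coordinate swap $\tilde P_\sigma\in\GL_{n+1}(\Z)$ exchanging positions $0$ and $k$ maps the projective closure $\cL_A^{\mathrm{proj}}=\{[\tilde\bx:\tilde\bx A]\}$ onto $\cL_{A'}^{\mathrm{proj}}$. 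Passing back to the affine chart $\{x_0=1\}$ via the rechart $\{x_k=1\}\to\{x_0=1\}$ yields the involutive fractional linear map
\[
\Phi(y_1,\ldots,y_n)=(y_1/y_k,\,\ldots,\,y_{k-1}/y_k,\,1/y_k,\,y_{k+1}/y_k,\,\ldots,\,y_n/y_k),
\]
a birational bijection between $\cL_A$ and $\cL_{A'}$ defined off the Lebesgue-null locus $\{y_k=0\}$.

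To verify that $\Phi$ preserves singularity, given a nonzero integer solution $(\bq',q_0')$ of the Dirichlet-type system $|\bq'\cdot\by'+q_0'|<c/Q^n$, $\|\bq'\|\leq Q$ for $\by':=\Phi(\by)$, one swaps the entries $q_k'$ and $q_0'$ to form $(\bq'',q_0'')\in\Z^n\times\Z$. The identity $\bq'\cdot\by'+q_0'=(1/y_k)(\bq''\cdot\by+q_0'')$ gives $|\bq''\cdot\by+q_0''|<c|y_k|/Q^n$, and the elementary bound $|q_0'|\leq nQ\,\|\by'\|_\infty+1$ yields $\|\bq''\|\ll Q$ with an implicit constant depending only on $\by'$. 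Letting $c\to 0$ and $Q\to\infty$ then witnesses singularity of $\by$; the converse follows from $\Phi^2=\mathrm{Id}$ off $\{y_k=0\}$.

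In both cases one obtains a bijection $\cL_A\leftrightarrow\cL_{A'}$ off Lebesgue-null sets preserving the singular/non-singular dichotomy, so a.e.\ non-singularity in $\cL_A$ is equivalent to a.e.\ non-singularity in $\cL_{A'}$, and Theorem~\ref{subspaceA} upgrades this equivalence to \textbf{Condition $(2)^\star$} for $A$ if and only if for $A'$. The main obstacle I expect is making the singularity-preservation argument in Case~2 precise — tracking the implicit constants (which depend on $|y_k|$ and $\|\by'\|_\infty$) through the $c,Q$ quantifiers, and handling the degenerate subcase where $\bq''=0$ but $q_0''\neq 0$, which is ruled out for sufficiently large $Q$ by integrality.
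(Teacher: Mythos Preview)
Your argument is correct, but it takes a quite different route from the paper. The paper's proof is a one-liner: it invokes Lemma~5.5 of \cite{Kleinbock-exponent}, which directly shows that $\Vert R_A\bc(\bw)\Vert$ is unchanged under row permutations of $A$ (via a corresponding permutation of the coordinates of $\bw$), and since $\Vert\pi_\bullet(\bw)\Vert$ does not see $A$ at all, Condition~$(2)^\star$ transfers immediately.

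By contrast, you pass through Theorem~\ref{subspaceA} to replace Condition~$(2)^\star$ by the geometric statement ``$\lambda_{\cL_A}$-a.e.\ point is not singular'', and then exhibit, for each transposition of rows, a measure-class-preserving and singularity-preserving bijection $\cL_A\leftrightarrow\cL_{A'}$. For transpositions fixing the $0$-th row this is a $\GL_n(\Z)$ coordinate swap; for $(0,k)$ it is the fractional linear map $\Phi$ coming from a projective coordinate swap in $\GL_{n+1}(\Z)$. Your verification that $\Phi$ preserves singularity is sound: the identity $\bq'\cdot\by'+q_0'=(1/y_k)(\bq''\cdot\by+q_0'')$ together with the bound $|q_0'|\leq n\|\by'\|_\infty Q+1$ lets you absorb all $\by$-dependent constants into the ``for every $c>0$'' quantifier, and the degenerate case $\bq''=0$ is indeed excluded for large $Q$ by integrality. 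There is no circularity, since Theorem~\ref{subspaceA} is established independently of this lemma.

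The trade-off: the paper's approach is purely algebraic and stays inside the exterior-algebra computation, at the cost of importing an external lemma. Your approach is self-contained and makes transparent that Condition~$(2)^\star$ is really a property of the affine subspace modulo the natural $\GL_{n+1}(\Z)$ action on the projective closure, but it requires the detour through Theorem~\ref{subspaceA} and the somewhat delicate constant-tracking in Case~2.
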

\begin{proof}
Using Lemma 5.5 in \cite{Kleinbock-exponent}, we can conclude that the definition of $\Vert R_A\bc(\bw)\Vert$ does not change by permutting rows of $A$.
\end{proof}

\begin{lemma}
Let $A'=BA$, where $B\in \mathrm{GL}_{s+1}(\Q)$. Then $\cL_A$ satisfies \textbf{Condition $(2)^\star$} (\ref{condition_lastest1}) if and only if $\cL_{A'}$ satisfies \textbf{Condition $(2)^\star$} (\ref{condition_lastest1}). 
\end{lemma}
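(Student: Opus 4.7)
The plan is to use the matrix identity
\[
R_{A'}\,\tilde{B} \;=\; B\,R_A, \qquad \tilde{B} := \begin{bmatrix} B & 0 \\ 0 & I_{n-s} \end{bmatrix} \in \GL_{n+1}(\Q),
\]
which follows from a direct block computation (both sides equal $[B\mid BA]$), and then to exploit the rationality of $B$ to absorb denominators into constants. First I would choose a positive integer $N$ with $N\tilde{B}$ and $N\tilde{B}^{-1}$ integral. Since $B^{-1}\in\GL_{s+1}(\Q)$ plays a symmetric role, it suffices to transfer \textbf{Condition $(2)^\star$} from $\cL_{A}$ to $\cL_{A'}$.

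Given a nonzero $\bw\in\bigwedge^{j}\Z^{n+1}$, the natural candidate is the exterior-power image $\bw':=N^{j}(\bigwedge^{j}\tilde{B})\bw\in\bigwedge^{j}\Z^{n+1}$. Applying the matrix identity to the column vector $\bc(\bw)\in\R^{n+1}\otimes\bigwedge^{j-1}V_{0}$ gives
\[
R_{A'}\bigl(\tilde{B}\bc(\bw)\bigr) \;=\; B\,R_{A}\bc(\bw),
\]
so $\|R_{A'}(\tilde{B}\bc(\bw))\|\leq \|B\|\cdot\|R_{A}\bc(\bw)\|$. Since $\tilde{B}$ fixes $\be_{s+1},\ldots,\be_{n}$, a direct expansion of $\hat{B}:=\bigwedge^{j}\tilde{B}$ on basis wedges also yields $\|\pi_{\bullet}(\bw')\|$ comparable to $\|\pi_{\bullet}(\bw)\|$ up to constants depending only on $B$ and $N$. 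These two comparisons indicate that the thresholds $c^{j}/Q^{n-j+1}$ and $c^{j}Q^{j}$ in \textbf{Condition $(2)^\star$} transform compatibly, and after absorbing the $B, N$-dependent constants into $c$, \textbf{Condition $(2)^\star$} for $\cL_{A}$ should yield \textbf{Condition $(2)^\star$} for $\cL_{A'}$ along the same sequence $Q_{i}\to\infty$.

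The principal obstacle is that the image of $\bc$ inside $\R^{n+1}\otimes\bigwedge^{j-1}V_{0}$ is not invariant under $\tilde{B}$: in general $\tilde{B}\bc(\bw)\neq \bc(\hat{B}\bw)$, so one cannot directly replace $R_{A'}(\tilde{B}\bc(\bw))$ with $R_{A'}\bc(\bw')/N^{j}$. I would resolve this by expanding $\hat{B}\bw$ in coordinates, splitting each basis wedge $\be_{I}$ according to $I\cap\{0,\ldots,s\}$ and $I\cap\{s+1,\ldots,n\}$, and showing that the discrepancy $\tilde{B}\bc(\bw)-\bc(\hat{B}\bw)$ contributes to $R_{A'}$ only through terms already controlled by $\|R_{A}\bc(\bw)\|$ and $\|\pi_{\bullet}(\bw)\|$. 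Lemma 5.1 of \cite{Kleinbock-exponent}, which bounds $\|\bw\|\ll 1+\|\pi_{\bullet}(\bw)\|$ once $\|R_{A}\bc(\bw)\|<1$, will be essential to keep this error term dominated by the two bounds already assumed, in the spirit of Lemma~5.5 of \cite{Kleinbock-exponent} (which handles the analogous row-permutation invariance recorded in Lemma~\ref{rowinterchange}). Once that bookkeeping is done, the transferred condition follows with a modified constant.
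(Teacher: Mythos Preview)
Your approach is the same as the paper's: both reduce to the key comparison that, for each $\bw\in\bigwedge^{j}\Z^{n+1}$, there is a $\tilde\bw$ with $\|R_{A'}\bc(\tilde\bw)\|\le C\|R_{A}\bc(\bw)\|$ and $\|\tilde\bw\|\asymp\|\bw\|$. The paper simply imports this from Lemma~5.6 of \cite{Kleinbock-exponent} (together with the row-permutation invariance of Lemma~\ref{rowinterchange}); you reconstruct it via the explicit candidate $\bw'=N^{j}(\bigwedge^{j}\tilde B)\bw$ and the identity $R_{A'}\tilde B=BR_{A}$, which is exactly the natural route.

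Two points in your sketch need tightening. First, the two-sided claim $\|\pi_{\bullet}(\bw')\|\asymp\|\pi_{\bullet}(\bw)\|$ is false in general, because $\tilde B$ need not preserve $V_{0}$: for instance with $j=2$ and $\bw=\be_{0}\wedge\be_{s+1}$ one has $\pi(\bw)=0$ while $\pi(\hat B\bw)=\bigl(\sum_{k=1}^{s}B_{k0}\be_{k}\bigr)\wedge\be_{s+1}\neq 0$ whenever some $B_{k0}\neq 0$. What is true (and what the paper uses) is only $\|\bw'\|\asymp\|\bw\|$; one then invokes Lemma~5.1 of \cite{Kleinbock-exponent} to get the one-sided bound $\|\pi_{\bullet}(\bw')\|\le\|\bw'\|\ll 1+\|\pi_{\bullet}(\bw)\|$ in the regime $\|R_{A}\bc(\bw)\|<1$, which suffices for the transfer.

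Second, your proposed handling of the discrepancy $\tilde B\bc(\bw)-\bc(\hat B\bw)$ is not yet sharp enough. Saying that $R_{A'}$ applied to it is ``controlled by $\|R_{A}\bc(\bw)\|$ and $\|\pi_{\bullet}(\bw)\|$'' is insufficient: any contribution of order $\|\pi_{\bullet}(\bw)\|\sim c^{j}Q^{j}$ to the $R_{A'}$-term would swamp the required bound $c^{j}Q^{-(n-j+1)}$. One must show that after applying $R_{A'}$ the discrepancy is bounded by a constant multiple of $\|R_{A}\bc(\bw)\|$ alone (no $\pi_{\bullet}$-term), and this is precisely the delicate bookkeeping carried out in Kleinbock's Lemma~5.6; the paper sidesteps it by citation.
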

\begin{proof}
The proof follows from the observation in the proof of Lemma $5.6$ in \cite{Kleinbock-exponent}. It is observed that under the hypothesis of this lemma, for $\bw\in\bigwedge^j\Z^{n+1}$, there exists $\tilde\bw$ such that 
 $\Vert R_{A'}\bc(\tilde\bw)\Vert\leq C\Vert R_{A}\bc(\bw)\Vert$, $\Vert \bw\Vert=C_2\Vert\tilde{\bw}\Vert$, where $C_1, C_2$ depend on the matrices $A,B$ and $A'$. Using Lemma \ref{rowinterchange} and the previous observation we conclude this lemma.
\end{proof}
Let $V_0$ be the space spanned by $\be_1,\cdots,\be_n$ in $\R^{n+1}.$ We say that $A$ (or $\cL_{A}$) satisfies \textbf{Condition $(\omega,j)$} if there exists $c>0$ 
and  $Q_i\to\infty$ such that there is no nonzero $\bw\in \bigwedge^{j}\Z^{n+1}$, satisfying the following system:
\begin{equation}\label{condition_lastest}
\left\{\begin{aligned}
  &\Vert R_A\bc(\bw)\Vert<\frac{c^j}{Q^{\omega}_i}\\
  & \Vert \pi_{\bullet}(\bw)\Vert<c^j Q_i
\end{aligned}\right.
\end{equation}
\begin{proposition}\label{lastp}
Suppose that $A$ has more than one row, and let $A'$ be the matrix obtained from $A$ by removing one of its rows, and the removed row is a rational linear combination of remaining rows. $\cL_{A}$ satisfies \textbf{Condition $(\omega,j)$}  if and only if $\cL_{A'}$ satisfies \textbf{Condition $(\omega,j)$}. 
\end{proposition}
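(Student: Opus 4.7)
The plan is to reduce to a convenient normal form for $A$ and then decompose $\bc(\bw)$ along the direction $\be_s$, so that Condition $(\omega,j)$ for $\cL_A$ factors into Condition $(\omega,j)$ for $\cL_{A'}$ plus a trivial integrality obstruction.

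First I would normalize $A$. By Lemma \ref{rowinterchange} and the preceding lemma (whose proofs apply for each $(\omega,j)$ individually, not only for the conjunction Condition $(2)^\star$), Condition $(\omega,j)$ for $\cL_A$ is unchanged under row permutation of $A$ and under left multiplication of $A$ by an element of $\mathrm{GL}_{s+1}(\Q)$. Since the removed row is a $\Q$-linear combination of the remaining rows, these invariances let me assume that the removed row is the last one (index $s$) and has been cleared to zero, so $A=\begin{bmatrix}A'\\0\end{bmatrix}$ with $A'$ an $s\times(n-s)$ matrix.

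The key step is an orthogonal decomposition of any $\bw\in\bigwedge^j\Z^{n+1}$ as $\bw=\bw_1+\be_s\wedge\bu$, where $\bw_1$ collects those terms $w_I\be_I$ with $s\notin I$ and $\bu\in\bigwedge^{j-1}\Z^n$ lives in the $\be_s$-complement, identified with $\Z^n$ by the order-preserving index bijection $\phi$ that skips $s$. A direct unwinding of the definition of $\bc$ shows that each $\bc(\bw_1)_i$ is $\be_s$-free (and $\bc(\bw_1)_s=0$), while for $i\ne s$ the component $\bc(\be_s\wedge\bu)_i$ contains a $\be_s$-factor, and $\bc(\be_s\wedge\bu)_s=\pm\pi'(\bu)$, where $\pi'$ denotes the analogue of $\pi$ for $\bigwedge^{j-1}\Z^n$. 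Because the $s$-th row of $R_A$ is just $\be_s$, the action of $R_A$ preserves this splitting; the $\be_s$-free and $\be_s$-containing parts of $\bigwedge^{j-1}V_0$ are orthogonal, giving $\|R_A\bc(\bw)\|=\max(\|R_A\bc(\bw_1)\|,\|R_A\bc(\bw_2)\|)$ and $\|\pi(\bw)\|=\max(\|\pi(\bw_1)\|,\|\pi(\bw_2)\|)$. Moreover, $\|R_A\bc(\bw_1)\|=\|R_{A'}\bc'(\bw_1')\|$ and $\|\pi(\bw_1)\|=\|\pi'(\bw_1')\|$, where $\bw_1'\in\bigwedge^j\Z^n$ is the $\phi$-image of $\bw_1$.

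The reverse direction $\cL_A\Rightarrow\cL_{A'}$ is then immediate: given $\bw'\in\bigwedge^j\Z^n\setminus\{0\}$, lift to $\bw\in\bigwedge^j\Z^{n+1}$ via $\phi^{-1}$ with $\bu=0$, so both sides of the condition are preserved exactly. For the forward direction $\cL_{A'}\Rightarrow\cL_A$, I would use the same $c,Q_i$ witnessing the condition for $\cL_{A'}$, with $i$ large enough that $c^j/Q_i^\omega<1$. Suppose $\bw\ne 0$ violates the condition for $\cL_A$. The orthogonality forces $\bw_1'$ to violate it for $\cL_{A'}$, so $\bw_1'=0$ and $\bw=\be_s\wedge\bu$. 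The bound $\|\pi'(\bu)\|=\|\bc(\bw_2)_s\|<c^j/Q_i^\omega<1$ gives $\pi'(\bu)=0$ by integrality, so $\bu=\be_0\wedge\bv$ for some $\bv\in\bigwedge^{j-2}$ of the $\{\be_0,\be_s\}$-complement. For this special form, $\bc'(\bu)$ has only its $0$-th entry nonzero (equal to $\pm\bv$), so $R_A\bc(\bw_2)$ collapses to $\pm\be_s\wedge\bv$ in coordinate $0$ and to zero elsewhere. Thus $\|\bv\|=\|R_A\bc(\bw_2)\|<1$, forcing $\bv=0$, hence $\bw=0$, a contradiction.

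I expect the main obstacle to be the bookkeeping of indices and signs needed to verify the orthogonality split and the formulae $\bc(\be_s\wedge\bu)_i=\pm\be_s\wedge\bc'(\bu)_{\phi(i)}$ for $i\ne s$ and $\bc(\be_s\wedge\bu)_s=\pm\pi'(\bu)$; everything else is algebraic and elementary. Notably, Condition $(\omega,j-1)$ for $\cL_{A'}$ is never invoked, because the $\bu$-part is pinned down to the rank-one form $\be_0\wedge\bv$ on which $\bc'$ degenerates, leaving only the smallness of a single integer vector to rule out.
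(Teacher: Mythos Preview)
Your argument is correct, but it takes a longer route than the paper. The key difference is which row you normalize to zero. You move the redundant row to position $s$ and split along $\be_s$; the paper instead moves it to position $0$ (the ``affine constant'' row) and uses the already-present projection $\pi$ killing $\be_0$. With that choice one gets the clean block identity
\[
R_A\bc(\bw)=\begin{bmatrix}\bc(\bw)_0\\ R_{A'}\bc(\pi(\bw))\end{bmatrix},\qquad \pi_\bullet(\pi(\bw))=\pi_\bullet(\bw),
\]
and the implication $\cL_{A'}\Rightarrow\cL_A$ collapses to a single integrality step: $\|\bc(\bw)_0\|<1$ forces $\bc(\bw)_0=0$, which already says $\bw=\pi(\bw)$, and then the $\cL_{A'}$ hypothesis kills $\pi(\bw)$. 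By contrast, because your splitting direction $\be_s$ is not the one singled out by $\pi$, you are forced into the two-stage reduction $\bw_1'=0\Rightarrow \pi'(\bu)=0\Rightarrow \bv=0$, with attendant index and sign bookkeeping for $\bc(\be_s\wedge\bu)$. What your approach buys is nothing extra here---it is an equivalent computation after a less convenient normalization---though the orthogonal $\be_s$-decomposition you set up is a reusable template if one ever needed to remove a row other than the zeroth without renormalizing first.
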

\begin{proof}
 Without loss of generality, we assume that the first row $\ba_0=\mathbf{0}$ is removed. For any $\bw\in \bigwedge^j\Z^{n+1}$, one observes that $\bc(\bw)_i=\bc(\pi(\bw))_i$ for $i>1$. Hence $$
R_A\bc(\bw)=\begin{bmatrix}
1 & \mathbf{0} & \ba_0\\
\mathbf{0}^T & I_{s} & A'\\
\end{bmatrix}\begin{bmatrix}
\bc(\bw)_0\\
\bc(\pi(\bw))_1\\
\vdots\\
\bc(\pi(\bw))_n\\
\end{bmatrix}=\begin{bmatrix}
\bc(\bw)_0\\
R_{A'}\bc(\pi(\bw))\\
\end{bmatrix}.$$ Therefore for any $\bw\in \bigwedge^j\Z^{n+1}$, 
$\Vert R_{A}\bc(\bw)\Vert=\max\{\Vert \bc(\bw)_0\Vert,\Vert R_{A'}\bc(\pi(\bw))\Vert\}$, $\pi_{\bullet}(\pi(\bw))=\pi_{\bullet}(\bw)$ and $\pi(\bw)\in \bigwedge^j\Z^n$. In particular, for $\bw\in \bigwedge^j V_0$, $\Vert  R_{A'}\bc(\pi(\bw))\Vert= \Vert R_{A}\bc(\bw)\Vert$. Hence if $\cL_{A}$ satisfies \textbf{Condition $(\omega,j)$}, then $\cL_{A'}$ satisfies the same.

Since $\Vert \bc(\bw)_0\Vert\geq 1 $ for any $\bw\in \bigwedge^j\Z^{n+1}$, unless it is $0$, we conclude  that if $\cL_{A'}$ satisfies \textbf{Condition $(\omega,j)$} (\ref{condition_lastest}), then $\cL_{A}$ satisfies the same.
\end{proof}

\subsection{Proof of Theorem \ref{main3}}
If $\cL=\{\bc\}$, an $0$ dimensional subspace Theorem \ref{main3} is true in a trivial manner. 

For affine hyperplanes $n-s=1$, \textbf{Condition $(2)^\star$} (\ref{condition_lastest1}) is only needed to be verified for $j=1$. By Proposition \ref{1isimportant} we conclude Theorem \ref{main3} for affine hyperplanes.
 
For subspaces considered in Theorem \ref{main3}, suppose the corresponding matrix $A$ is not $n$-singular. Then by Proposition \ref{1isimportant}, there exists $c>0$ and $Q_i\to\infty$ such that Equation \eqref{condition_lastest1} has no solution for $j=1$. That is \textbf{Condition $(n,1)$} is satisfied by $\cL_{A}$. Let us call the nontrivial row of $A$ to be $\ba\in \R^{n-s}$. By Propostion \ref{lastp}, \textbf{Condition $(n,1)$} is satisfied by $\cL_{\ba}=\{(0,\cdots,0,\ba)\}$. This means $(0,\cdots,0,\ba)\in\R^{n}$ is not $n$-singular, which implies $(0,\cdots,0,\ba)$ is not singular. Therefore, by Theorem \ref{subspaceA} we have that $\cL_{\ba}$ satisfies \textbf{Condition $(2)^\star$}(\ref{condition_lastest1}). Note that \textbf{Condition $(2)^\star$} is same as all of the  \textbf{Condition $(\frac{n-j+1}{j},j)$}, $j=1,\cdots,n$. Thus using Proposition \ref{lastp} we conclude that $\cL_{A}$ satisfies \textbf{Condition $(2)^\star$}. Hence we conclude by Theorem \ref{subspaceA}.\\

\subsection*{Acknowledgements} We thank Anish Ghosh and Ralf Spatzier for many helpful discussions and several helpful remarks which have improved the paper. We also thank Subhajit Jana for several helpful suggestions about the presentation of this paper. The second named author thanks University of Michigan for providing help as this project started as a Research Experience for Undergraduates project in summer 2021. 

\bibliographystyle{abbrv}
\bibliography{singular_real}

\end{document}